\date{\today}
\newcommand{\gridded}{{\scalebox{0.7}{\musSharp}}}
\newcommand{\ggridded}{{\musDoubleSharp}}
\newcommand{\colorder}{\trianglelefteq^{\text{col}}}%
\newcommand{\roworder}{\trianglelefteq^{\text{row}}}%
\renewenvironment{proof}[1][\proofname]{\par
  \pushQED{\qed}%
  \normalfont \partopsep=\z@skip \topsep=\z@skip
  \trivlist
  \item[\hskip\labelsep
        \itshape
    #1\@addpunct{.}]\ignorespaces
}{%
  \popQED\endtrivlist\@endpefalse\bigskip
}
\definecolor{lightgray}{rgb}{0.8, 0.8, 0.8}
\definecolor{darkgray}{rgb}{0.65, 0.65, 0.65}
\newcounter{todocounter}
\theoremstyle{plain}
\newtheorem{theorem}{Theorem}[section]
\newtheorem{observation}[theorem]{Observation}
\newtheorem{proposition}[theorem]{Proposition}
\newtheorem{corollary}[theorem]{Corollary}
\theoremstyle{definition}
\newtheorem*{remark*}{Remark}
\newfont{\footsc}{cmcsc10 at 8truept}
\newfont{\footbf}{cmbx10 at 8truept}
\newfont{\footrm}{cmr10 at 10truept}
\renewenvironment{abstract}%
                {
                  \begin{list}{}%
                     {\setlength{\rightmargin}{1in}%
                      \setlength{\leftmargin}{1in}}%
                   \item[]\ignorespaces\begin{small}}%
                 {\end{small}\unskip\end{list}}
\newcommand{\C}{\mathcal{C}}
\newcommand{\st}{\::\:}
\newcommand{\fnmatrix}[1]{\Big(\raisebox{0.5pt}{\scalebox{0.75}{\text{$\begin{matrix*}[r]#1\end{matrix*}$}}}\Big)}
\newcommand{\tinymatrix}[1]{\big(\raisebox{0.75pt}{\scalebox{0.5}{\text{$\begin{matrix*}[r]#1\end{matrix*}$}}}\big)}
\newcommand{\tinymatrixbigp}[1]{\left(\raisebox{0.75pt}{\scalebox{0.5}{\text{$\begin{matrix*}[r]#1\end{matrix*}$}}}\right)}
\newcommand{\Grid}{\operatorname{Grid}}
\newcommand{\Geom}{\operatorname{Geom}}
\newcommand{\zpm}{0/\mathord{\pm} 1}
\newcommand{\Xfig}{\begin{tikzpicture}[scale=1, anchor=base]
	\pgftransformxscale{0.112};
	\pgftransformyscale{0.112};
	\draw [semithick, line cap=round] (0,2)--(2,0) (0,0)--(2,2);
\end{tikzpicture}}
\newcommand{\Vfig}{\begin{tikzpicture}[scale=1, anchor=base]
	\pgftransformxscale{0.112};
	\pgftransformyscale{0.112};
	\draw [semithick, line cap=round] (0,2)--(1,0) (1,0)--(2,2);
\end{tikzpicture}}
\newcommand{\Vfigt}{\begin{tikzpicture}[scale=1, anchor=base]
	\pgftransformxscale{0.112};
	\pgftransformyscale{0.112};
	\draw [semithick, line cap=round] (0,0)--(1,2) (1,2)--(2,0);
\end{tikzpicture}}
\newcommand{\Vfigl}{\begin{tikzpicture}[scale=1, anchor=base]
	\pgftransformxscale{0.112};
	\pgftransformyscale{0.112};
	\draw [semithick, line cap=round] (2,0)--(0,1) (0,1)--(2,2);
\end{tikzpicture}}
\newcommand{\Vfigr}{\begin{tikzpicture}[scale=1, anchor=base]
	\pgftransformxscale{0.112};
	\pgftransformyscale{0.112};
	\draw [semithick, line cap=round] (0,0)--(2,1) (2,1)--(0,2);
\end{tikzpicture}}
\renewcommand{\th}{\ensuremath{^{\text{\scriptsize th}}}\xspace}
\newcommand\mybullet{\raisebox{-5pt}{\normalsize \ensuremath{\bullet}}}
\def\absdot{\@ifnextchar[{\@absdotlabel}{\@absdotnolabel}}
	\def\@absdotlabel[#1]#2{%
		\node at #2 {\normalsize \mybullet};
		\node at #2 [below=2pt] {\ensuremath{#1}};
	}
	\def\@absdotnolabel#1{%
		\node at #1 {\normalsize \mybullet};
	}
\newcommand{\plotpartialperm}[1]{
	\foreach \i/\j in {#1} {
		\absdot{(\i,\j)};
	};
}
\newcommand{\arc}[2]{
	\draw[thick] (#1,0) arc (180:0:{(#2-#1)/2});
	\absdot{(#1,0)};
	\absdot{(#2,0)};
}
\newcommand{\matching}[1]{
	\foreach \i/\j in {#1} {
		\arc{\i}{\j};
	};
}
\newcommand{\plotperm}[1]{
	\foreach \j [count=\i] in {#1} {
		\absdot{(\i,\j)};
	};
}
\newcommand{\plotpermbox}[4]{
	\draw [darkgray, thick, rounded corners=0.01, line cap=round]
		({#1-0.5}, {#2-0.5}) rectangle ({#3+0.5}, {#4+0.5});
}
\let\start@align@nopar\start@align
\let\start@gather@nopar\start@gather
\let\start@multline@nopar\start@multline
\long\def\start@align{\par\start@align@nopar}
\long\def\start@gather{\par\start@gather@nopar}
\long\def\start@multline{\par\start@multline@nopar}
\title{\sc Letter Graphs and Geometric Grid Classes of Permutations}
\newcommand{\addressline}[1]{%
	\begin{minipage}{1.75in}
	\begin{center}
	\begin{footnotesize}
		\begin{tabular}{c}
		#1
		\end{tabular}
	\end{footnotesize}	
	\end{center}
	\end{minipage}
}
\author{
	\centering
	\begin{tabular}{ccc}
	Bogdan Alecu
		\footnote{Alecu's research was supported by EPSRC via a Doctoral Training Partnership grant to the University of Warwick.}
	&
	Robert Ferguson
	&
	Mamadou Moustapha Kant\'e%
		\footnote{Kant\'e's research was supported by the French Agency for Research under the ASSK project (ANR-18-CE40-0025).}
	\\
	\addressline{Mathematics Institute\\ University of Warwick\\ Coventry, UK}
	&
	\addressline{Department of Mathematics\\ University of Florida\\ Gainesville, Florida USA}
	&
	\addressline{Universit\'e Clermont Auvergne\\ LIMOS, CNRS\\ Aubi\`ere, France}
	\\[0.5in]
	Vadim Lozin
	&
	Vincent Vatter%
		\footnote{Vatter's research was supported by the Simons Foundation via award number 636113.}\,
	&
	Viktor Zamaraev
	\\
	\addressline{Mathematics Institute\\ University of Warwick\\ Coventry, UK}
	&
	\addressline{Department of Mathematics\\ University of Florida\\ Gainesville, Florida USA}
	&
	\addressline{Department of Computer Science\\ University of Liverpool\\ Liverpool, UK}
	\end{tabular}
}
\begin{document}
\maketitle

\renewcommand*{\thefootnote}{\fnsymbol{footnote}}
\addtocounter{footnote}{2}

\pagestyle{main}

\begin{abstract}
We uncover a connection between two seemingly unrelated notions: lettericity, from structural graph theory, and geometric griddability, from the world of permutation patterns. Both of these notions capture important structural properties of their respective classes of objects. We prove that these notions are equivalent in the sense that a permutation class is geometrically griddable if and only if the corresponding class of inversion graphs has bounded lettericity.
\end{abstract}

\section{Introduction}

Structural graph theory and the study of permutation patterns are two flourishing areas of mathematics. They share a few conceptual similarities---among others, the widespread featuring of forbidden substructures, and the prominent usage of decompositions. Nevertheless, the fields have developed their own separate terminologies, techniques, and goals.

Because of this, it happens periodically that equivalent notions appear in the two areas, independently of each other, and under different names. In most cases, a relationship between such notions can be derived directly from their definitions. Sometimes, however, this link can be much more subtle. This is indeed the case with the two notions studied in the present paper: lettericity on the one hand, and geometric griddability on the other. At a first reading of the definitions, nothing suggests that there should be a connection between these two notions. However, such a connection does exist, and was first observed by Alecu, Lozin, de Werra, and Zamaraev~\cite{alecu:letter-graphs-a:abstract,alecu:letter-graphs-a:}. They showed that if $\C$ is a geometrically griddable permutation class, then $G_\C$, the associated class of inversion graphs, has bounded lettericity. They conjectured that the converse to this statement also holds. We prove this conjecture, yielding the following result.

\begin{theorem}
\label{thm-ggc-lettericity}
The permutation class $\C$ is geometrically griddable if and only if the corresponding graph class $G_{\C}$ has bounded lettericity.
\end{theorem}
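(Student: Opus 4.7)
The forward direction was established by Alecu, Lozin, de~Werra, and Zamaraev~\cite{alecu:letter-graphs-a:}, so the plan is to prove the converse. Suppose $G_\C$ has lettericity bounded by some integer $k$; then every graph in $G_\C$ admits a letter representation with an alphabet of size at most $k$ and one of finitely many possible decoders. Splitting $\C$ according to which decoder $D$ represents $G_\pi$, one obtains finitely many subclasses $\C_D$, and because induced subgraphs of a letter graph over $D$ remain letter graphs over $D$ (just restrict the underlying word), each $\C_D$ is itself a permutation class. Since geometric griddability is preserved by finite unions (via a block-diagonal $\zpm 1$ matrix), it suffices to show each $\C_D$ is geometrically griddable.

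Fix a decoder $D$ on an alphabet $\Sigma$, and consider $\pi \in \C_D$ with letter representation $w_1 \cdots w_n$. The key observation is that the decoder directly dictates the inversion pattern: for $i<j$ we have $\pi(i)>\pi(j)$ if and only if $(w_i,w_j)\in D$, so $D$ governs both the edge structure of $G_\pi$ and the order-theoretic structure of $\pi$. In particular, the positions carrying a fixed letter $a$ form a monotone subsequence of $\pi$---increasing if $(a,a)\notin D$, decreasing otherwise---and for any two distinct letters $a,b$ the pair $\bigl((a,b)\in D,\,(b,a)\in D\bigr)$ rigidly fixes the interleaving of the $a$- and $b$-indexed subsequences, yielding one of four configurations (one class entirely above or below the other, or the two forming a globally monotone sequence in position).

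The plan is then to use this data to construct a $\zpm 1$ matrix $M$ whose cells correspond canonically to the letters of $\Sigma$: define equivalence relations on $\Sigma$ capturing ``same cell,'' ``same row,'' and ``same column'' from the decoder's behaviour, let each letter's slope come from $(a,a)\in D$, and arrange the resulting cells in $M$ according to the four-valued pairwise configurations above. With $M$ in hand, one attempts to show that every $\pi\in\C_D$ can be drawn on the standard figure of $M$ with the points of each letter class supported on the prescribed line segment, yielding $\C_D \subseteq \Geom(M)$.

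The hardest step will be verifying that this candidate $M$ genuinely witnesses \emph{geometric} griddability and not merely monotone griddability---geometric grid classes are strictly stronger, requiring that points in cells sharing a row or column interact in a rigid, geometrically consistent way determined by the standard figure. Translating the \emph{pairwise} decoder constraints into a \emph{global} geometric drawing is, I expect, the heart of the argument. I anticipate this will require putting letter representations into a canonical form---for instance, choosing a decoder-minimal representation, or replacing the alphabet $\Sigma$ by a refinement (of size depending only on $k$) that makes the ``same cell / same row / same column'' relations genuine equivalence relations---and then showing by induction on $n$ that the canonical form is compatible with a drawing on the standard figure of $M$.
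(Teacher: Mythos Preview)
Your reduction to a single decoder $D$ is fine, but the plan breaks at what you call the ``key observation.'' You write that for $i<j$ one has $\pi(i)>\pi(j)$ if and only if $(w_i,w_j)\in D$, and from this you derive the monotone letter-classes and the ``four configurations.'' This tacitly assumes that the isomorphism $\psi\colon V(G_\pi)\to V(\Gamma_D(w))$ is the identity---that the $i$th position of $\pi$ corresponds to the $i$th letter of $w$. There is no reason for this: $\psi$ is an arbitrary graph isomorphism, so the linear order on $\{1,\dots,n\}$ imposed by the word bears no a~priori relation to the position order or the value order of $\pi$. All the decoder guarantees is that the $\psi$-preimage of each letter is a clique or an independent set in $G_\pi$, hence a monotone subsequence of $\pi$; it does not tell you how those subsequences sit relative to one another by position or by value, so the ``four configurations'' claim (and hence the construction of $M$ from $D$ alone) is unfounded. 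Extracting that geometric information from $\psi$ is exactly the content of the hard direction, not a preliminary step, and an induction on $n$ will not produce it without some mechanism relating word order to position order.

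The paper proceeds quite differently and does not attempt to read the matrix off the decoder. Bounded lettericity is first used only to exclude arbitrarily large matchings and anti-matchings, which via the Huczynska--Vatter characterisation yields a \emph{monotone} gridding of each $\pi$ by a matrix $M$ of bounded size. One then reletters so that each letter is confined to a single cell of this gridding, and further refines the gridding by slicing along the rectangular hulls of the letter classes. The technical core---a sequence of separation lemmas for permutations with no nontrivial monotone intervals---shows that $\psi$ is a common linear extension of all the column and row orders of the refined gridding; by the consistency criterion for partial multiplication matrices, this forces the refined gridding to be geometric, with a matrix whose size is bounded in terms of $t$, $u$, and $|\Sigma|$. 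Thus the monotone gridding is the starting point rather than a by-product, and the lettering serves to certify consistency of the local orders rather than to name the cells.
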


Despite their seemingly unrelated definitions, this result reflects the fact that the concepts of lettericity and geometric griddability capture the same structural data of their respective combinatorial objects: a partition of the elements and a linear ordering of those elements, encoded in words, that interacts nicely with the partition. Indeed, this close connection is evidenced by the way the word encodings imply well-quasi-orderability of the relevant classes (via Higman's lemma~\cite{higman:ordering-by-div:}; in fact both are \emph{labelled} well-quasi-ordered---see Atminas and Lozin~\cite{atminas:labelled-induce:} and Brignall and Vatter~\cite{brignall:labelled-well-q:}). Further context for Theorem~\ref{thm-ggc-lettericity} can be found in the article~\cite[Section~2]{lozin:from-words-to-g:} accompanying Lozin's plenary lecture at the conference \emph{LATA 2019}.

We define the terms involved in Sections~\ref{sec-defs-graphs}, \ref{sec-defs-perms}, and \ref{sec-ggc}. Then, in Section~\ref{sec-ggc-encoding}, we develop an alternative approach to geometric grid classes that is necessary for our work. The proof of our main result is contained in Section~\ref{sec-main-proof}, and we conclude in Section~\ref{sec-conclusion}. The unification of lettericity and geometric griddability established in this paper marks the starting point of a promising research direction, which we discuss in the conclusion.

\section{Graphs, Graph Classes, and Lettericity}
\label{sec-defs-graphs}

Our graphs are all finite, simple, and undirected. Given a graph $G$, we denote by $V(G)$ its vertex set and by $E(G)$ its edge set. We write $u\sim v$ to denote that the vertices $u$ and $v$ are adjacent, meaning that $uv\in E(G)$. We denote the complement of $G$ by $\overline{G}$. We denote the \emph{complete graph} or \emph{clique} on $n$ vertices by $K_n$, so an \emph{independent set} or \emph{co-clique} on $n$ vertices is denoted by $\overline{K}_n$. Given graphs $G$ and $H$ on disjoint vertex sets, we denote their \emph{disjoint union} by $G\uplus H$. This is the graph defined by $V(G\uplus H)=V(G)\cup V(H)$ and $E(G\uplus H)=E(G)\cup E(H)$. Given a graph $G$ and a natural number~$m$, we denote by~$mG$ the disjoint union of~$m$ copies of $G$ (that is, the copies are chosen to have disjoint vertex sets).

An \emph{induced subgraph} of the graph $G$ consists of a subset of the vertices of $G$ together with all edges of $G$ connecting vertices of this subset. This is the only notion of subgraph considered in this paper, and it is a partial order on the set of finite graphs. A \emph{class} of graphs is a set of finite graphs that is closed under isomorphism and closed downward under the induced subgraph ordering (these are also frequently called \emph{hereditary properties}). Thus if $\C$ is a graph class, $G\in\C$, and $H$ is isomorphic to an induced subgraph of $G$, then $H\in\C$. We introduce here the notions and results required, referring to the encyclopedic text of Brandst\"adt, Le, and Spinrad~\cite{brandstadt:graph-classes:-:} for further information and context on graph classes.

Letter graphs were introduced in 2002 by Petkov{\v{s}}ek~\cite{petkovsek:letter-graphs-a:}, and describe a way to build graphs from words. Let $\Sigma$ be a finite alphabet and let $D\subseteq\Sigma^2$ be a set of ordered pairs that we call a \emph{decoder}. Equivalently, we may think of the decoder as a digraph with vertex set $\Sigma$. For any word ${w=w(1)w(2)\cdots w(n)\in\Sigma^\ast}$, the \emph{letter graph of $w$ with respect to $D$} is defined to be the graph $\Gamma_D(w)$ defined by
\begin{eqnarray*}
	V(\Gamma_D(w))	&=&	\{1,2,\dots,n\}, \text{and}\\
	E(\Gamma_D(w))	&=&	\{ij\st\text{$i<j$ and $(w(i),w(j))\in D$}\}.	
\end{eqnarray*}
We frequently say that the vertex $i\in V(\Gamma_D(w))$ is \emph{encoded} by the letter $w(i)\in\Sigma$. If $\Sigma$ is an alphabet of size $k$, then the graph $\Gamma_D(w)$ is said to be a \emph{$k$-letter graph}.

It is easy to see that every graph on $n$ vertices is isomorphic to an $n$-letter graph---fix a labeling of the vertices of $G$ by $\Sigma=\{1,2,\dots,n\}$, let $w=12\cdots n$, and set $D=\{(i,j)\st\text{$i<j$ and $i\sim j$ in $G$}\}$. A triple $(\Sigma, D, w)$ such that $G$ is isomorphic to $\Gamma_D(w)$ is called a \emph{letter graph realization}, or simply \emph{lettering} of $G$. The minimum value of $k$ such that $G$ admits a lettering with $|\Sigma| = k$ is called the \emph{lettericity} of the graph $G$. Another fact that is easy to see is that if $G$ is isomorphic to the letter graph $\Gamma_D(w)$, then its complement $\overline{G}$ is isomorphic to $\Gamma_{\Sigma^2\setminus D}(w)$. From this, we immediately obtain the following.

\begin{proposition}[Petkov{\v{s}}ek~{\cite[Proposition~2 (iii)]{petkovsek:letter-graphs-a:}}]
\label{prop-lettericity-complement}
The lettericity of a graph and its complement are equal.
\end{proposition}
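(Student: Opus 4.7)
The plan is to exploit the observation made immediately before the proposition, namely that if $G \cong \Gamma_D(w)$ with alphabet $\Sigma$, then $\overline{G} \cong \Gamma_{\Sigma^2 \setminus D}(w)$. This gives us a lettering of $\overline{G}$ with the same alphabet (and the same word), so the lettericity of $\overline{G}$ is at most that of $G$. The argument is then symmetric.

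In more detail, I would first let $k$ denote the lettericity of $G$ and fix an optimal lettering $(\Sigma, D, w)$ with $|\Sigma| = k$, so that $G \cong \Gamma_D(w)$. Then I would verify the boxed observation: for indices $i < j$ in $\{1, \dots, n\}$, we have $ij \in E(\Gamma_D(w))$ precisely when $(w(i), w(j)) \in D$, hence $ij \notin E(\Gamma_D(w))$ precisely when $(w(i), w(j)) \in \Sigma^2 \setminus D$. This is exactly the edge relation defining $\Gamma_{\Sigma^2 \setminus D}(w)$, so this latter letter graph is isomorphic to $\overline{G}$ via the identity map on vertices. Since the alphabet $\Sigma$ still has size $k$, the lettericity of $\overline{G}$ is at most $k$.

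Applying the same argument with $G$ and $\overline{G}$ interchanged (using $\overline{\overline{G}} = G$) yields the reverse inequality, so the two lettericities coincide.

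The main obstacle is essentially nonexistent: the proposition is a one-line consequence of the symmetry between $D$ and $\Sigma^2 \setminus D$ in the definition of a letter graph. The only thing to be careful about is that the alphabet $\Sigma$ is held fixed under complementation—only the decoder changes—which is why the alphabet size, and hence the lettericity, is preserved.
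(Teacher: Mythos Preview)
Your proposal is correct and follows exactly the paper's approach: the paper does not give a separate proof but simply notes that $\overline{G}\cong\Gamma_{\Sigma^2\setminus D}(w)$ and states that the proposition follows immediately. Your write-up merely spells out this observation and the symmetric inequality.
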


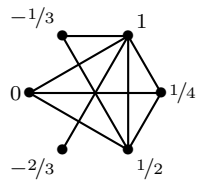
\begin{figure}
\begin{center}
\begin{footnotesize}
	\begin{tikzpicture}[scale=0.175, baseline=(current bounding box.center)]
	\draw [thick] (10,5)--(7.5,9.330127020);
	\draw [thick] (10,5)--(0,5);
	\draw [thick] (10,5)--(7.5,0.669872980);
	\draw [thick] (7.5,9.330127020)--(2.5,9.330127020);
	\draw [thick] (7.5,9.330127020)--(0,5);
	\draw [thick] (7.5,9.330127020)--(2.5,0.669872980);
	\draw [thick] (7.5,9.330127020)--(7.5,0.669872980);
	\draw [thick] (2.5,9.330127020)--(7.5,0.669872980);
	\draw [thick] (0,5)--(7.5,0.669872980);
	\plotpartialperm{10/5, 7.5/9.330127020, 2.5/9.330127020, 0/5, 2.5/0.669872980, 7.5/0.669872980};
	\node at (10,5) [right] {$\nicefrac{1}{4}$};
	\node at (0,5) [left] {$0$};
	\node at (2.5,9.330127020) [above left] {$-\nicefrac{1}{3}$};
	\node at (7.5,9.330127020) [above right] {$1$};
	\node at (2.5,0.669872980) [below left] {$-\nicefrac{2}{3}$};
	\node at (7.5,0.669872980) [below right] {$\nicefrac{1}{2}$};
	\end{tikzpicture}
\end{footnotesize}
\end{center}
\caption{A threshold graph with vertex weights indicated. Two vertices are connected by an edge if the sum of their weights is nonnegative.}
\label{fig-threshold-1}
\end{figure}

It is apparent from the definition that all of the vertices of $\Gamma_D(w)$ that are encoded by a single letter form either a clique or a co-clique---if the letter is $a\in\Sigma$, then these vertices form a clique if~$(a,a)\in D$ and they form a co-clique if $(a,a)\notin D$. This implies that the lettericity of a graph is bounded below by its \emph{co-chromatic number}, which is defined to be the minimum number of cliques and co-cliques that the vertices of the graph can be partitioned into. That said, the perfect matchings have co-chromatic number $2$ but unbounded lettericity, as we sketch below.

\begin{proposition}
\label{prop-lettericity-matching}
The set of perfect matchings $\{mK_2\st m\ge 1\}$ has unbounded lettericity.
\end{proposition}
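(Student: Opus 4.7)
The plan is to show that if $mK_2 \cong \Gamma_D(w)$ for some decoder $D$ over an alphabet of size $k$, then $m \le \binom{k+1}{2}$, which immediately implies that the lettericity of $mK_2$ grows without bound as $m \to \infty$. The strategy is to partition the $m$ edges of $mK_2$ according to the unordered pair of letters encoding their endpoints, and then to show that each class contributes at most one edge.

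Suppose $\Gamma_D(w) \cong mK_2$ with $|\Sigma|=k$, and for each $a \in \Sigma$ let $V_a$ denote the set of positions of $w$ encoded by $a$. First I would handle the diagonal contributions: as observed in the paragraph preceding the statement, $V_a$ induces either a clique or a co-clique in $\Gamma_D(w)$, and since $mK_2$ is triangle-free, a clique on $V_a$ has at most two vertices, contributing at most one edge, while a co-clique contributes none. Next, for distinct letters $a,b$, I would consider the four possibilities for whether $(a,b)$ and $(b,a)$ lie in $D$. If both lie in $D$, then every $a$-vertex is joined to every $b$-vertex, and the maximum-degree-one constraint forces $|V_a|, |V_b| \le 1$, hence at most one edge; if neither lies in $D$, there are no edges between $V_a$ and $V_b$.

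The interesting mixed case, and the one I expect to demand the most care, is when exactly one of $(a,b), (b,a)$ lies in $D$, say $(a,b)$. Then an $a$-vertex at position $i$ and a $b$-vertex at position $j$ are adjacent in $\Gamma_D(w)$ precisely when $i < j$, so the bipartite subgraph between $V_a$ and $V_b$ is a ``staircase''. Writing $i_1$ for the leftmost position of $V_a$ and $j_q$ for the rightmost position of $V_b$, the degree-one constraint applied to these two vertices yields that at most one position of $V_b$ exceeds $i_1$ and at most one position of $V_a$ is less than $j_q$. Since any edge $\{i,j\}$ with $i \in V_a$ and $j \in V_b$ in this case satisfies $i_1 \le i < j \le j_q$, both endpoints must be among those at-most-one witnesses, so at most one edge arises.

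Summing over the $k + \binom{k}{2} = \binom{k+1}{2}$ unordered pairs of letters (diagonals included) yields $m \le \binom{k+1}{2}$, so the lettericity of $mK_2$ is at least roughly $\sqrt{2m}$, proving the proposition. The only delicate step is the mixed-case analysis above; the other cases reduce immediately to the observation that $mK_2$ has maximum degree one and contains no triangle.
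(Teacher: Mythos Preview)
Your argument is correct and gives the explicit bound $m\le\binom{k+1}{2}$, hence a lower bound of order $\sqrt{2m}$ on the lettericity of $mK_2$. The paper takes a different and shorter route: it argues by pigeonhole that if all matchings had lettericity at most $r$, then in $(mr^2+1)K_2$ some $m$ edges would receive the same pair of labels, and restricting to those edges would exhibit $mK_2$ as a $2$-letter graph for every $m$; this is then contradicted by the (asserted but not proved) fact that $3K_2$ has lettericity $3$. Your approach is more self-contained, since it does not defer to a base case, and it yields a quantitative bound directly; on the other hand, the paper's reduction is slicker and, once one is willing to check the base case, leads more quickly to the sharp result (mentioned immediately after the proposition) that the lettericity of $mK_2$ is exactly $m$. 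The mixed case you flagged as delicate is indeed the only place requiring care, and your handling of it via the leftmost $a$-position and rightmost $b$-position is clean.
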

\begin{proof}
Suppose to the contrary that the lettericity of every matching~$mK_2$ was bounded by some constant, say~$r$. In such an $r$-lettering of a matching, every edge has only $r^2$ possible labelings (or letterings). Therefore the matching $(mr^2+1)K_2$ must contain~$m$ edges with the same labels. It follows that the matching~$mK_2$ would therefore actually have lettericity $2$. However, it is not difficult to verify that $3K_2$ has lettericity $3$.
\end{proof}

In fact, without much additional effort, one can show that the lettericity of~$mK_2$ is precisely~$m$. This was claimed without proof by Petkov{\v{s}}ek~\cite[Section~5.3]{petkovsek:letter-graphs-a:}, and proved by Alecu, Lozin, and de Werra~\cite[Lemma~3]{alecu:the-micro-world:}; the approach used above is from Ferguson and Vatter~\cite{ferguson:letter-graphs-a:}.

%
%

We define the lettericity of a class of graphs to be the greatest lettericity of any of its members. Obviously this quantity is often infinite (for example, whenever a class contains all perfect matchings). The quintessential example of a graph class of finite lettericity is the class of threshold graphs introduced by Chv\'atal and Hammer, first in a 1973 research report and then in a 1977 paper~\cite{chvatal:aggregation-of-:}, and the subject of a book by Mahadev and Peled~\cite{mahadev:threshold-graph:}. The original definition of this class (which lends the class its name) is that the graph $G$ is a \emph{threshold graph} if there exists 
an assignment $v \mapsto c_v$ of real-valued weights to the vertices of $G$
such that $u\sim v$ if and only if $c_u+c_v\ge 0$ (in fact in the original definition this threshold value is arbitrary, but we lose nothing by taking it to be $0$). For example, the graph shown in Figure~\ref{fig-threshold-1} is a threshold graph, with weights as indicated by its labels.

The set of threshold graphs is a graph class, and it can be shown that it is precisely the class of graphs that do not contain induced subgraphs isomorphic to the matching on two edges $2K_2$, the cycle on four vertices $C_4=\overline{2K_2}$, or the path on four vertices $P_4$. More relevant to our concerns, it can also be shown that every threshold graph can be built from the empty graph by successively adding isolated vertices (adjacent to none of the previous vertices) and dominating vertices (adjacent to all of the previous vertices). Figure~\ref{fig-threshold-2} illustrates how our example from Figure~\ref{fig-threshold-1} can be drawn in this manner.

\begin{figure}
\begin{center}
\begin{footnotesize}
	\begin{tikzpicture}[scale=0.9, yscale=0.55, baseline=(current bounding box.center)]
	%
	\matching{1/4,2/4,6/1,6/2,6/3,6/4};
	\plotpartialperm{1/0,2/0,3/0,4/0,5/0,6/0};
	\draw [thick]
		(1,0)--(2,0)
		(3,0)--(4,0)
		(5,0)--(6,0);
	\node at (1,0) [above left] {$0$};
	\node at (2,0) [above left] {$\nicefrac{1}{4}$};
	\node at (3,0) [above] {$-\nicefrac{1}{3}$};
	\node at (4,0) [above right] {$\nicefrac{1}{2}$};
	\node at (5,0) [above] {$-\nicefrac{2}{3}$};
	\node at (6,0) [above] {$1$};
	\end{tikzpicture}
\quad\quad\quad
	\begin{tikzpicture}[scale=0.9, yscale=0.55, baseline=(current bounding box.center)]
	%
	\matching{1/4,2/4,6/1,6/2,6/3,6/4};
	\plotpartialperm{1/0,2/0,3/0,4/0,5/0,6/0};
	\draw [thick]
		(1,0)--(2,0)
		(3,0)--(4,0)
		(5,0)--(6,0);
	\node at (1,0) [above left] {$1$};
	\node at (2,0) [above left] {$2$};
	\node at (3,0) [above] {$3$};
	\node at (4,0) [above right] {$4$};
	\node at (5,0) [above] {$5$};
	\node at (6,0) [above] {$6$};
	\end{tikzpicture}
\end{footnotesize}
\end{center}
\caption{Two additional drawings of the threshold graph from Figure~\ref{fig-threshold-1}.}
\label{fig-threshold-2}
\end{figure}
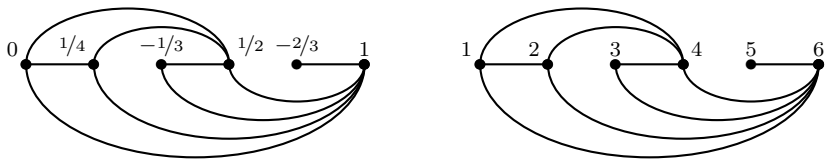

In the first of these drawings, we have retained the weights on the vertices for ease of comparison with our previous drawing of this graph, while in the second we have labelled vertices by their order of addition to the graph (as isolated or dominating vertices). From this perspective, it is clear how to encode threshold graphs as $2$-letter graphs; using the two-letter alphabet $\Sigma=\{\mathsf{i},\mathsf{d}\}$ and the decoder $D=\{(\mathsf{i},\mathsf{d}),(\mathsf{d},\mathsf{d})\}$, we encode vertices in their order of addition to the graph by $\mathsf{i}$ if they are added as isolated vertices or by $\mathsf{d}$ if they are added as dominating vertices. Thus with this alphabet and decoder, our example can be seen to be isomorphic to $\Gamma_D(\mathsf{ididid})$. In fact it is also isomorphic to $\Gamma_D(\mathsf{ddidid})$ because the first letter does not matter in this case, but that is beside the point.

The discussion above demonstrates that the class of threshold graphs has lettericity $2$. In this case, every threshold graph can be encoded using the same decoder, but note that our definition of the lettericity of a class does not require us to use the same decoder for every graph in the class. That said, since there are only finitely many possible decoders for a given finite alphabet, if a class has finite lettericity, then we can also find a single decoder $D$ over a larger but still finite alphabet $\Sigma$ such that all members of the class can be expressed as $\Gamma_D(w)$ for some word~$w\in\Sigma^\ast$.

\section{Permutations, Permutation Classes, and Grids}
\label{sec-defs-perms}

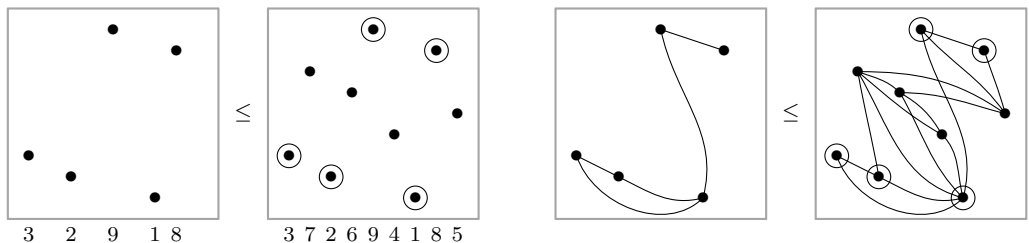
\begin{figure}
\begin{footnotesize}
\begin{center}
	\begin{tikzpicture}[scale={2.8/10}, baseline=(current bounding box.north)]
		\plotpermbox{0.5}{0.5}{9.5}{9.5};
		\plotpartialperm{1/3,3/2,5/9,7/1,8/8};
		\node at (1,0) [below] {$3$};
		\node at (3,0) [below] {$2$};
		\node at (5,0) [below] {$9$};
		\node at (7,0) [below] {$1$};
		\node at (8,0) [below] {$8$};
	\end{tikzpicture}
	\begin{tikzpicture}[scale={2.8/10}, baseline=(current bounding box.north)]
		\draw[color=white] (0,0)--(0,10);
		\node at (0,5) {$\le$};
		\node at (0,0) [below] {\phantom{$1$}};
	\end{tikzpicture}
	\begin{tikzpicture}[scale={2.8/10}, baseline=(current bounding box.north)]
		\plotpermbox{0.5}{0.5}{9.5}{9.5};
		\plotperm{3,7,2,6,9,4,1,8,5};
		\draw (1,3) circle (16pt);
		\draw (3,2) circle (16pt);
		\draw (5,9) circle (16pt);
		\draw (7,1) circle (16pt);
		\draw (8,8) circle (16pt);
		\node at (1,0) [below] {$3$};
		\node at (2,0) [below] {$7$};
		\node at (3,0) [below] {$2$};
		\node at (4,0) [below] {$6$};
		\node at (5,0) [below] {$9$};
		\node at (6,0) [below] {$4$};
		\node at (7,0) [below] {$1$};
		\node at (8,0) [below] {$8$};
		\node at (9,0) [below] {$5$};
	\end{tikzpicture}
\quad\quad\quad
	\begin{tikzpicture}[scale={2.8/10}, baseline=(current bounding box.north)]
	    \draw (1,3)--(3,2);
	    \draw (5,9) to [out=-78.4349488, in=75] (7,1);
	    \draw (3,2) to [out=-26.5650512, in=195] (7,1);
	    \draw (1,3) to [out=-71.5650512, in=225] (7,1);
	    \draw (5,9) to [out=-18.4349488, in=161.5650512] (8,8);
		\plotpermbox{0.5}{0.5}{9.5}{9.5};
		\plotpartialperm{1/3,3/2,5/9,7/1,8/8};
	\end{tikzpicture}
	\begin{tikzpicture}[scale={2.8/10}, baseline=(current bounding box.north)]
		\draw[color=white] (0,0)--(0,10);
		\node at (0,5) {$\le$};
		\node at (0,0) [below] {\phantom{$1$}};
	\end{tikzpicture}
	\begin{tikzpicture}[scale={2.8/10}, baseline=(current bounding box.north)]
	    \draw (2,7) to [out=-80, in=100] (3,2);
	    \draw (1,3)--(3,2);
	    \draw (2,7) to [out=-20] (4,6);
	    \draw (2,7) to [out=-40, in=150] (6,4);
	    \draw (4,6) to [out=-30, in=120] (6,4);
	    \draw (5,9) to [out=-78.4349488, in=75] (7,1);
	    \draw (6,4) to [out=-45, in=105] (7,1);
	    \draw (4,6) to [out=-60, in=135] (7,1);
	    \draw (2,7) to [out=-60, in=165] (7,1);
	    \draw (3,2) to [out=-26.5650512, in=195] (7,1);
	    \draw (1,3) to [out=-71.5650512, in=225] (7,1);
	    \draw (5,9) to [out=-18.4349488, in=161.5650512] (8,8);
	    \draw (8,8) to [out=-71.5650512, in=105] (9,5);
	    \draw (5,9) to [out=-48.4349488, in=125] (9,5);
	    \draw (2,7) to [out=0, in=145] (9,5);
	    \draw (4,6) to [out=0, in=165] (9,5);
		\plotpermbox{0.5}{0.5}{9.5}{9.5};
		\plotperm{3,7,2,6,9,4,1,8,5};
		\draw (1,3) circle (16pt);
		\draw (3,2) circle (16pt);
		\draw (5,9) circle (16pt);
		\draw (7,1) circle (16pt);
		\draw (8,8) circle (16pt);
	\end{tikzpicture}
\end{center}
\end{footnotesize}
\caption{The containment order on permutations and the induced subgraph order on their corresponding inversion graphs.}
\label{fig-perm-contain}
\end{figure}

We think of permutations in one-line notation, so a permutation of \emph{length} $n$ is simply an ordering of the set $\{1,2,\dots,n\}$. The permutation~$\pi$ \emph{contains} the permutation $\sigma$ of length $k$ if it has a subsequence of length $k$ that is \emph{order isomorphic} to $\sigma$, by which we mean that the subsequence has the same pairwise comparisons as $\sigma$. For example, $\pi=372694185$ contains $\sigma=32514$, as witnessed by its subsequence $32918$, but~$\pi$ avoids $54321$ because it has no decreasing subsequence of length five. This containment is depicted graphically on the left of Figure~\ref{fig-perm-contain}. As in this figure, we often identify a permutation~$\pi$ with its \emph{plot}: the set of points $\{(i,\pi(i))\}$ in the plane.

A \emph{class} of permutations is a set of permutations that is closed downward under the containment order defined above. Thus if $\C$ is a permutation class, $\pi\in\C$, and $\sigma$ is contained in~$\pi$, then $\sigma\in\C$. We introduce here the notions and results required, referring to the survey by Vatter~\cite{vatter:permutation-cla:} for further information and context on permutation classes.

The \emph{inversion graph} of the permutation $\pi=\pi(1)\cdots\pi(n)$ is the graph $G_\pi$ on the vertices $\{1,\dots,n\}$ in which $i\sim j$ if and only if $\pi(i)$ and $\pi(j)$ form an \emph{inversion}, meaning that either $i<j$ and $\pi(i)>\pi(j)$ or $i>j$ and $\pi(i)<\pi(j)$, or equivalently, if and only if
\[
	(i-j)(\pi(i)-\pi(j))<0.
\]
These graphs are commonly called permutation graphs in the literature, but we use the term inversion graph here to emphasize \emph{how} they arise from permutations. As shown on the right of Figure~\ref{fig-perm-contain}, to obtain the inversion graph of a permutation from its plot, we simply add all edges between pairs of entries in which one lies to the northwest of the other.

If the permutation $\sigma$ is contained in the permutation~$\pi$, then it is clear that $G_\sigma$ is an induced subgraph of $G_\pi$. The converse, however, does not generally hold; for one example, $G_{2413}\cong G_{3142}\cong P_4$, but neither $2413$ nor $3142$ is contained in the other. That said, only the increasing and decreasing permutations correspond to cliques and co-cliques, so every clique (resp., co-clique) in $G_\pi$ corresponds to a decreasing (resp., increasing) subsequence of~$\pi$.

The monotone grid class of a~$\zpm$~matrix consists of all those permutations whose plots can be sliced into rectangles in such a way that each of the resulting rectangular sections contains a monotone subsequence, as specified by the matrix. More precisely, suppose that~$M$ is a~$\zpm$~matrix of size~$t\times u$. In order for the entries of our matrices to align with the plots of permutations, we index matrices in cartesian coordinates, so this means for us that~$M$ consists of $t$ columns and $u$ rows. An \emph{$M$-gridding} of the permutation~$\pi$ of length $n$ is a choice of \emph{column divisions} $1=x_1\le\cdots\le x_{t+1}\le n+1$ and \emph{row divisions} $1=y_1\le\cdots\le y_{u+1}\le n+1$ such that for all $i$ and $j$, the subsequence of~$\pi$ with indices in $[x_i,x_{i+1})$ and values in $[y_j,y_{j+1})$ is increasing if~$M(i,j)=1$, decreasing if~$M(i,j)=-1$, or empty if~$M(i,j)=0$. The \emph{monotone grid class of~$M$} is the set of all permutations that possess an~$M$-gridding, and is denoted by $\Grid(M)$.

\begin{figure}
\begin{center}
	\begin{tikzpicture}[scale={2.8/7}, baseline=(current bounding box.center)]
		\plotpermbox{1}{1}{3}{4};
		\plotpermbox{1}{5}{3}{6};
		\plotpermbox{4}{1}{6}{4};
		\plotpermbox{4}{5}{6}{6};
		\plotperm{5,2,4,3,6,1};
	\end{tikzpicture}
\end{center}
\caption[A monotone gridding of a permutation.]{A $\tinymatrix{-1&1\\1&-1}$-gridding of the permutation $524361$ (recall that permutations are read from left to right). Here the column divisions are given by $x_1=1$, $x_2=4$, and $x_3=7$, while the row divisions are $y_1=1$, $y_2=5$, and $y_3=7$.}
\label{fig-mono-gridding-skew-merged}
\end{figure}
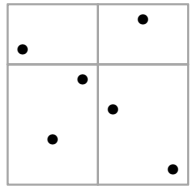


Figure~\ref{fig-mono-gridding-skew-merged} depicts a member of the grid class of $\tinymatrix{-1&1\\1&-1}$. This particular grid class admits a simpler definition: it consists of those permutations whose entries can be partitioned into an increasing subsequence and a decreasing subsequence. This is the definition under which these permutations were first studied, by Stankova~\cite{stankova:forbidden-subse:}, who called them \emph{skew-merged}. Stankova proved that a permutation is skew-merged if and only if it contains neither $2143$ nor $3412$.

The permutation class $\C$ is said to be \emph{monotonically griddable} if $\C\subseteq\Grid(M)$ for some~$\zpm$~matrix~$M$. Monotone grid classes were first considered in a 2003 paper by Murphy and Vatter~\cite{murphy:profile-classes:}, although they called them ``profile classes''. In their characterization of permutation classes of polynomial growth (see also~\cite{homberger:on-the-effectiv:}), Huczynska and Vatter~\cite{huczynska:grid-classes-an:} gave monotone grid classes their name and characterized the monotonically griddable permutation classes. Stankova's characterization of the skew-merged permutations in fact constitutes the base case of the inductive proof of the following result in~\cite{huczynska:grid-classes-an:}. For a generalization of this result we refer to Vatter~\cite[Theorem~3.1]{vatter:small-permutati:}.

\begin{theorem}[Huczynska and Vatter~{\cite[Theorem~2.5]{huczynska:grid-classes-an:}}]
\label{thm-mono-griddable}
The permutation class $\C$ is monotonically griddable if and only if there is a natural number~$m$ such that $\C$ contains neither $2143\cdots (2m)(2m-1)$ nor $(2m-1)(2m)\cdots 3412$.
\end{theorem}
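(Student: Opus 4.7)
The proof splits into a short necessity direction and a substantive sufficiency direction.

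\emph{Necessity.} Suppose $\C \subseteq \Grid(M)$ for some $\zpm$ matrix $M$ of size $t \times u$. I claim that $\pi_m = 2143\cdots(2m)(2m{-}1)$ admits no $M$-gridding once $m \ge tu + t + u - 1$, which suffices since the reverse pattern $(2m{-}1)(2m)\cdots 3412$ is handled by the symmetric argument. In any candidate $M$-gridding of $\pi_m$, each of the $m$ descending pairs (positions $2i-1, 2i$ with values $2i, 2i-1$) either lies entirely in a single cell or is split by one of the $t-1$ interior column divisions or $u-1$ interior row divisions. Since any interior column division $x_j$ separates a pair only when $x_j = 2i$ for some $i$, and analogously for row divisions, each interior division splits at most one pair, so at most $t+u-2$ pairs are split. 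This leaves at least $tu + 1$ pairs fully inside cells, and by pigeonhole some cell contains two such pairs. Those four entries form a $2143$-pattern, which is neither monotone nor empty, contradicting the fact that each cell of a gridding must be monotone or empty.

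\emph{Sufficiency.} The plan is induction on $m$. The base case $m = 2$ is exactly Stankova's theorem, which identifies $\mathrm{Av}(2143, 3412)$ with the skew-merged class $\Grid\tinymatrix{-1 & 1 \\ 1 & -1}$, so any class avoiding both patterns is monotonically griddable. For the inductive step, assume the result at level $m-1$ and suppose $\C$ avoids both length-$2m$ patterns. The goal is to produce a constant $N$, depending only on $m$ and $\C$, such that every $\pi \in \C$ decomposes into at most $N$ sub-permutations, each lying in a subclass that avoids the length-$2(m-1)$ forbidden patterns. The induction hypothesis grids each piece by a matrix of bounded size, and these griddings are assembled into a single $\zpm$ matrix covering all of $\C$.

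The main obstacle is uniformity: decomposing any individual $\pi$ is easy, but the gridding matrix must be chosen in advance for the whole class. The natural tool is an Erd\H{o}s--Szekeres / Ramsey style argument. Avoidance of direct sums of $21$ of length $m$ caps the length of any northeast-increasing stack of disjoint descending pairs, and dually avoidance of skew sums of $12$ of length $m$ caps northwest-decreasing stacks of ascending pairs. One would try to leverage these caps to extract, from every sufficiently long $\pi \in \C$, a bounded-height monotone strip whose removal strictly shortens the longest offending pattern, enabling induction. The delicate step is to arrange the row- and column-decompositions compatibly, so that the excised strip becomes a single additional row or column in the final gridding matrix rather than forcing a more complex grid structure.
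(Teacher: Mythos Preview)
The paper does not prove this theorem; it is quoted from Huczynska--Vatter with only the remark that Stankova's skew-merged characterisation is the base case of an inductive proof. Your overall plan (induction on $m$, base case $m=2$ via Stankova) therefore matches what the paper says about the original argument.

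Your necessity direction is correct and cleanly argued. The sufficiency direction, however, is not a proof but a proof \emph{outline} that stops precisely at the hard part. You correctly identify the obstacle---uniformity of the decomposition over all $\pi\in\C$---and then write ``One would try to leverage these caps to extract \dots\ a bounded-height monotone strip'' and ``The delicate step is to arrange the row- and column-decompositions compatibly.'' These sentences describe what must be done without doing it. The Erd\H{o}s--Szekeres observation you make (that avoidance of $\bigoplus^m 21$ bounds northeast stacks of descents, and dually) is true but does not by itself produce a strip whose removal reduces the parameter: you have not specified where to cut, why the cut is in a bounded position independent of $\pi$, nor why the pieces on either side avoid the length-$2(m{-}1)$ patterns. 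Without that mechanism the induction does not close, and the matrix $M$ for $\C$ is never actually constructed. As written, the sufficiency argument would need at least one more substantive paragraph carrying out the slicing explicitly before it counts as a proof.
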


The graphical analogues of the skew-merged permutations are the \emph{split graphs}: the graphs whose vertices can be partitioned into a clique and an co-clique. This graph class was first studied by F{\"{o}}ldes and Hammer~\cite{foldes:split-graphs:} in 1977, who showed that a graph is split if and only if it contains none of the graphs $2K_2$, $\overline{2K_2}=C_4$, or $C_5$ as induced subgraphs. We note that inversion graphs never contain induced cycles of length five or more, so a class of inversion graphs is split if and only if it contains neither $2K_2$ nor $\overline{2K_2}=C_4$.

More generally, we observe that
\begin{eqnarray*}
	G_{2143\cdots (2m)(2m-1)}	&=&	mK_2,	\text{and}\\
	G_{(2m-1)(2m)\cdots 3412}	&=&	\overline{mK_2}.
\end{eqnarray*}
Just as only the increasing and decreasing permutations correspond to cliques and co-cliques, only the permutation $2143\cdots (2m)(2m-1)$ corresponds to the matching~$mK_2$, and only its reverse ${(2m-1)(2m)\cdots 3412}$ corresponds to $\overline{mK_2}$. This gives us the following consequence of Theorem~\ref{thm-mono-griddable}, which is somewhat analogous to the main result of our paper, Theorem~\ref{thm-ggc-lettericity}.

\begin{corollary}
\label{cor-mono-griddable-graphs-rK2}
The permutation class $\C$ is monotonically griddable if and only if there is a natural number~$m$ such that the corresponding graph class $G_\C$ contains neither~$mK_2$ nor $\overline{mK_2}$.	
\end{corollary}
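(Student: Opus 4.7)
The plan is to deduce this corollary directly from Theorem~\ref{thm-mono-griddable}, using the two identifications displayed just before the statement together with the fundamental observation (already noted in Section~\ref{sec-defs-perms}) that whenever $\sigma$ is contained in $\pi$, the inversion graph $G_\sigma$ appears as an induced subgraph of $G_\pi$. The argument is essentially a translation between the permutation and graph sides of the dictionary, with the key inputs being (i) Theorem~\ref{thm-mono-griddable} and (ii) the uniqueness claim that $2143\cdots(2m)(2m-1)$ is the only permutation whose inversion graph equals $mK_2$ (and dually for $\overline{mK_2}$).

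For the forward direction, I would assume $\C$ is monotonically griddable and apply Theorem~\ref{thm-mono-griddable} to obtain an integer $m$ such that $\C$ avoids both $2143\cdots(2m)(2m-1)$ and $(2m-1)(2m)\cdots 3412$. Suppose toward a contradiction that $G_\C$ contained $mK_2$. Then some $\pi\in\C$ would have $2m$ entries whose induced subgraph in $G_\pi$ is $mK_2$; the subpermutation $\sigma$ on these entries is a pattern in $\pi$ with $G_\sigma = mK_2$. By the uniqueness noted above, $\sigma=2143\cdots(2m)(2m-1)$, contradicting the conclusion of Theorem~\ref{thm-mono-griddable}. The argument for $\overline{mK_2}$ is symmetric.

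For the reverse direction, I would assume that $G_\C$ contains neither $mK_2$ nor $\overline{mK_2}$ for some $m$. Since $G_{2143\cdots(2m)(2m-1)}=mK_2$ and $G_{(2m-1)(2m)\cdots 3412}=\overline{mK_2}$, and since patterns translate to induced subgraphs of inversion graphs, the class $\C$ must avoid both of these permutations. Theorem~\ref{thm-mono-griddable} then delivers monotone griddability.

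The only subtlety worth flagging is the uniqueness claim used in the forward direction: a priori, distinct permutations can share an inversion graph (the paper already highlights $G_{2413}\cong G_{3142}\cong P_4$), so it is not automatic that an induced $mK_2$ in $G_\pi$ forces the occurrence of the specific permutation $2143\cdots(2m)(2m-1)$. This is however exactly the assertion made in the paragraph preceding the corollary, and it can be verified directly: an inversion graph equal to $mK_2$ partitions the entries into $m$ pairs forming inversions with no other inversions between them, which forces the layered pattern $2143\cdots(2m)(2m-1)$. Granting this, the proof is a one-line translation in each direction.
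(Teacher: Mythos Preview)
Your proposal is correct and follows exactly the approach the paper intends: the corollary is stated as an immediate consequence of Theorem~\ref{thm-mono-griddable} together with the uniqueness observation in the preceding paragraph that only $2143\cdots(2m)(2m-1)$ (resp., its reverse) has inversion graph $mK_2$ (resp., $\overline{mK_2}$). The paper does not write out a formal proof, but your two-direction translation is precisely the argument it has in mind, and your flagging of the uniqueness subtlety matches the paper's own remark.
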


Proposition~\ref{prop-lettericity-matching} shows that the lettericity of the perfect matchings~$mK_2$ is unbounded, and thus by Proposition~\ref{prop-lettericity-complement}, the lettericity of their complements $\overline{mK_2}$ is also unbounded. Thus we have the following consequence of Corollary~\ref{cor-mono-griddable-graphs-rK2}.

\begin{corollary}
\label{cor-mono-griddable-graphs}
If the graph class $G_\C$ has bounded lettericity, then the permutation class $\C$ is monotonically griddable.
\end{corollary}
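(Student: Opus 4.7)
The plan is to prove the contrapositive: if $\C$ is not monotonically griddable, then $G_\C$ has unbounded lettericity. Applying Corollary~\ref{cor-mono-griddable-graphs-rK2} in its contrapositive form, failing to be monotonically griddable means that for every natural number $m$, the class $G_\C$ contains $mK_2$ or $\overline{mK_2}$ (possibly different ones for different $m$).

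Next, I would argue that at least one of the two families must be fully present in $G_\C$. Suppose for contradiction that there is some $m_0$ with $m_0 K_2 \notin G_\C$ and some $m_1$ with $\overline{m_1 K_2} \notin G_\C$. Since graph classes are closed under induced subgraphs, this implies that $m K_2 \notin G_\C$ for all $m \ge m_0$ and $\overline{m K_2} \notin G_\C$ for all $m \ge m_1$. But then for $m = \max(m_0, m_1)$, $G_\C$ contains neither $mK_2$ nor $\overline{mK_2}$, contradicting the conclusion of the previous paragraph. Hence either $mK_2 \in G_\C$ for every $m$, or $\overline{mK_2} \in G_\C$ for every $m$.

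In the first case, Proposition~\ref{prop-lettericity-matching} directly gives that $G_\C$ has unbounded lettericity. In the second case, Proposition~\ref{prop-lettericity-complement} tells us that the lettericity of $\overline{mK_2}$ equals the lettericity of $mK_2$, so Proposition~\ref{prop-lettericity-matching} again forces unbounded lettericity. Either way, $G_\C$ has unbounded lettericity, completing the contrapositive.

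This argument is essentially a chain of corollaries, so there is no real technical obstacle; the only point requiring a bit of care is the pigeonhole/closure step that promotes ``$mK_2$ or $\overline{mK_2}$ appears for each $m$'' to ``one of the two families appears for \emph{every} $m$,'' which relies crucially on downward closure of $G_\C$ under induced subgraphs.
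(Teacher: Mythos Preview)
Your proof is correct and follows essentially the same route as the paper, which derives the corollary from Propositions~\ref{prop-lettericity-complement} and~\ref{prop-lettericity-matching} together with Corollary~\ref{cor-mono-griddable-graphs-rK2}. The pigeonhole/closure step is a harmless detour: since the lettericity of $mK_2$ and of $\overline{mK_2}$ are equal and unbounded in $m$, it already follows directly from your first paragraph that $G_\C$ contains graphs of arbitrarily large lettericity, without needing one entire family to be present.
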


The converse of Corollary~\ref{cor-mono-griddable-graphs} does not hold, as there are monotonically griddable permutation classes for which the corresponding graph classes have unbounded lettericity (in fact we've just presented an example: the skew-merged permutations are monotonically griddable, but the corresponding graph class, the split graphs, has unbounded lettericity). We close this section by remarking that the graphical analogues of general monotonically griddable permutation classes have only recently attracted attention, in the work of Atminas~\cite{atminas:classes-of-grap:}.

\section{Geometric Grid Classes}
\label{sec-ggc}

From our previous discussion, it follows that Chv\'atal and Hammer's threshold graphs are a subclass of F{\"{o}}ldes and Hammer's split graphs. This can be seen either by recalling the iterative construction of threshold graphs (by adding isolated or dominating vertices in sequence), or from the forbidden subgraph characterizations of the two classes (graphs that avoid $2K_2$, $\overline{2K_2}=C_4$, and $P_4$ will necessarily avoid $2K_2$, $\overline{2K_2}=C_4$, and $C_5$). Moreover, this containment of graph classes is strict because $P_4$ is a split graph, but is not a threshold graph. The relationship between the class of split graphs and the class of threshold graphs is a graphical analogue of the relationship between monotone grid classes and geometric grid classes. Having defined the former in the previous section, we define the latter here.

Geometric grid classes may be defined in terms of drawings of permutations in the plane $\mathbb{R}^2$, which we think of as divided into \emph{cells} whose corners are given by the integer lattice $\mathbb{Z}^2$. The \emph{standard figure} of a~$\zpm$~matrix~$M$ is the point set in $\mathbb{R}^2$ consisting of
\begin{itemize}[topsep=-4pt, itemsep=-2pt]
\item the increasing open line segment from $(i-1,j-1)$ to $(i,j)$ if~$M(i,j)=1$ or
\item the decreasing open line segment from $(i-1,j)$ to $(i,j-1)$ if~$M(i,j)=-1$.
\end{itemize}
An example is shown on the left of Figure~\ref{fig-ggc-example}; we remind the reader again that we index matrices in cartesian coordinates. We further extend this indexing to the cells of the standard figure of~$M$, so that the cell $(2,1)$ of this standard figure corresponds to the entry~$M(2,1)$ and lies in the second column and first row.

\begin{figure}
\begin{footnotesize}
\begin{center}
	\begin{tikzpicture}[scale=1.4, baseline=(current bounding box.center)]
		\draw[step=1, darkgray, thick, line cap=round] (0,0) grid (3,2);
		\draw [very thick, line cap=round] (0,2)--(1,1);
		\draw [very thick, line cap=round] (2,0)--(1,1);
		\draw [very thick, line cap=round] (2,2)--(1,1);
		\draw [very thick, line cap=round] (3,2)--(2,1);
		\draw [very thick, line cap=round] (3,0)--(2,1);
	\end{tikzpicture}
\quad\quad\quad
	\begin{tikzpicture}[scale=1.4, baseline=(current bounding box.center)]
		\draw[step=1, darkgray, thick, line cap=round] (0,0) grid (3,2);
		\draw [very thick, line cap=round] (0,2)--(1,1);
		\draw [very thick, line cap=round] (2,0)--(1,1);
		\draw [very thick, line cap=round] (2,2)--(1,1);
		\draw [very thick, line cap=round] (3,2)--(2,1);
		\draw [very thick, line cap=round] (3,0)--(2,1);
		\plotpartialperm{1.8/1.8, 0.3/1.7, 2.6/0.4, 2.5/1.5, 2.4/0.6, 1.3/0.7, 0.8/1.2};
		\node at (1.8,1.8) [below right] {$7$};
		\node at (0.3,1.7) [above right] {$6$};
		\node at (2.6,0.4) [below left] {$1$};
		\node at (2.5,1.5) [above left] {$5$};
		\node at (2.4,0.6) [below left] {$2$};
		\node at (1.3,0.7) [above right] {$3$};
		\node at (0.8,1.2) [above right] {$4$};
	\end{tikzpicture}
\end{center}
\end{footnotesize}
\caption[An example of a standard figure and a permutation drawn on that standard figure.]{On the left, the standard figure of the matrix $\tinymatrix{-1&1&1\\0&-1&-1}$. On the right, a drawing of the permutation $6437251$ on this figure.}
\label{fig-ggc-example}
\end{figure}
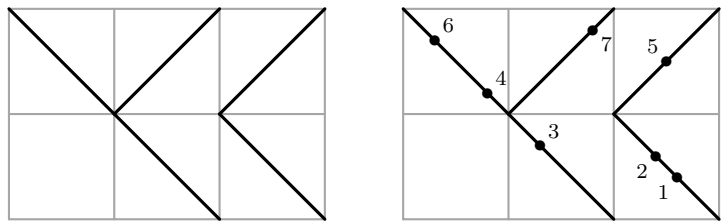

Any subset of $n$ points of the standard figure of~$M$ in which no two points lie on a common horizontal or vertical line (such point sets are often called \emph{generic}) corresponds to a permutation by labeling the points from $1$ to $n$ from bottom to top and then reading their labels from left to right. We say that the permutation~$\pi$ of length $n$ can be \emph{drawn} on the standard figure of~$M$ if it corresponds to such a subset. An example is shown on the right of Figure~\ref{fig-ggc-example}.

The \emph{geometric grid class} of~$M$ is the class of all permutations that can be drawn on the standard figure of~$M$ in this manner. We denote this class by $\Geom(M)$. The permutation class $\C$ is called \emph{geometrically griddable} if it is contained in a geometric grid class, that is, if $\C\subseteq\Geom(M)$ for some (finite)~$\zpm$~matrix~$M$.

It is clear that
\[
	\Geom(M)\subseteq\Grid(M)
\]
for all~$\zpm$ matrices~$M$. In some cases (characterized by whether a certain graph determined by the matrix~$M$ is a forest---see Vatter~\cite[Proposition~12.4.12]{vatter:permutation-cla:}), this is an equality, but not always. In this work, we use only the fact that geometrically griddable classes are necessarily monotonically griddable. Thus in particular, Theorem~\ref{thm-mono-griddable} implies that every geometrically griddable permutation class must avoid $2143\cdots (2m)(2m-1)$ and $(2m-1)(2m)\cdots 3412$ for some natural number~$M$. We note that there is, to-date, no characterization such as Theorem~\ref{thm-mono-griddable} for geometrically griddable classes themselves.

One of the first examples of a geometric grid class to be considered (in fact, before the machinery of geometric grid classes had been established) is the class of permutations that can be \emph{drawn on a circle}~\cite{vatter:on-points-drawn:}. This class is the same as the geometric grid class of the matrix $\tinymatrix{1&-1\\-1&1}$.

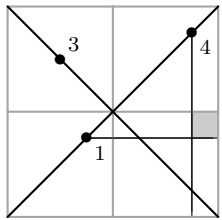
\begin{figure}
\begin{footnotesize}
\begin{center}
	\begin{tikzpicture}[scale=1.4, baseline=(current bounding box.center)]
		\draw [lightgray, fill=lightgray] (1.75,0.75) rectangle (2,1);
		\draw (0.75,0.75)--(2,0.75);
		\draw (1.75,1.75)--(1.75,0);
		\draw[step=1, darkgray, thick, line cap=round] (0,0) grid (2,2);
		\draw (0.75,0.75)--(1.95,0.75);
		\draw (1.75,1.75)--(1.75,0.05);
		\plotpartialperm{0.5/1.5, 0.75/0.75, 1.75/1.75};
		\node at (0.5,1.5) [above right] {$3$};
		\node at (0.75,0.75) [below right] {$1$};
		\node at (1.75,1.75) [below right] {$4$};
		\draw [thick, line cap=round] (0,2)--(2,0);
		\draw [thick, line cap=round] (0,0)--(2,2);
	\end{tikzpicture}
\end{center}
\end{footnotesize}
\caption
	[The permutation $3142$ cannot be drawn on an $\mathsf{X}$. Once we place the $3$, $1$, and $4$ on the $\mathsf{X}$, there is no place for the $2$ to lie simultaneously above the $1$ and to the right of the $4$.]
	{The permutation $3142$ cannot be draw on an $\Xfig$. Once we place the $3$, $1$, and $4$ on the $\Xfig$, there is no place for the $2$ to lie simultaneously above the $1$ and to the right of the $4$.}
\label{fig-no-3142-on-X}
\end{figure}

More relevant to our previous discussion is the class of permutations that can be drawn on an $\Xfig$, or in other words, the geometric grid class of the matrix $\tinymatrix{-1&1\\1&-1}$. As with the permutations that can be drawn on a circle above, this geometric grid class is a strict subset of the corresponding monotone grid class:
\[
	\Geom\fnmatrix{-1&1\\1&-1}\subsetneq\Grid\fnmatrix{-1&1\\1&-1}.
\]
An example of why this inclusion is strict is shown in Figure~\ref{fig-no-3142-on-X}. In fact, this geometric grid class consists of those skew-merged permutations that are also \emph{separable}, meaning that they avoid $2413$ and $3142$.

If $G_\pi$ is a threshold graph, then~$\pi$ can be drawn on an $\Xfig$. Indeed, in this case we can go a step further: if $G$ is a threshold graph, then $G\cong G_\pi$ for a permutation~$\pi$ that can be drawn on an $\Xfig$. But we do not need to consider all permutations that can be drawn on an $\Xfig$ to get the class of threshold graphs. In~\cite{golumbic:threshold-graph:}%
\footnote{This theorem can also be found in Golumbic's textbook \emph{Algorithmic Graph Theory and Perfect Graphs}~\cite{golumbic:algorithmic-gra:}.},
Golumbic showed that the threshold graphs can be obtained as inversion graphs of members of the class
\[
	\Geom\fnmatrix{-1\\1},
\]
which is the class of permutations that can be drawn on a $\Vfigr$. Golumbic's characterization is in terms of the \emph{shuffle product}, where for words $u$ and $v$, the set $u\shuffle v$ consists of all words formed by interleaving the letters of $u$ and $v$ in all possible ways. For example,
\newcommand{\snum}[1]{\textcolor[rgb]{0, 0.1294117647, 0.64705882352}{#1}}
\newcommand{\lnum}[1]{\textcolor[rgb]{0.98039215686, 0.27450980392, 0.0862745098}{#1}}
\[
	\snum{12}\shuffle \lnum{543}
	=
	\{	\snum{12}\lnum{543},
		\snum{1}\lnum{5}\snum{2}\lnum{43},
		\snum{1}\lnum{54}\snum{2}\lnum{3},
		\snum{1}\lnum{543}\snum{2},
		\lnum{5}\snum{12}\lnum{43},
		\lnum{5}\snum{1}\lnum{4}\snum{2}\lnum{3},
		\lnum{5}\snum{1}\lnum{43}\snum{2},
		\lnum{54}\snum{12}\lnum{3},
		\lnum{54}\snum{1}\lnum{3}\snum{2},
		\lnum{543}\snum{12}
	\}.	
\]
His result then states that the threshold graphs are precisely the inversion graphs of the permutations contained in the shuffle product
\[
	\Bigl(12\cdots m\Bigr)\shuffle \Bigl(n(n-1)\cdots (m+1)\Bigr)
\]
for some nonnegative integers $n$ and~$m$, which is equivalent to being drawn on a $\Vfigr$. By symmetry, one can also obtain the threshold graphs as the inversion graphs of permutations that can be drawn on a $\Vfig$, a $\Vfigl$, or a $\Vfigt$; see Figure~\ref{fig-five-ididid}.

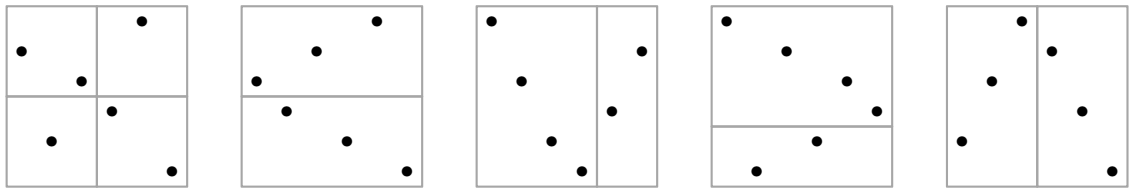
\begin{figure}
\begin{footnotesize}
\begin{center}
	\begin{tikzpicture}[scale={2.8/7}, baseline=(current bounding box.center)]
		\plotpermbox{1}{1}{3}{3};
		\plotpermbox{1}{4}{3}{6};
		\plotpermbox{4}{1}{6}{3};
		\plotpermbox{4}{4}{6}{6};
		\plotperm{5,2,4,3,6,1};
	\end{tikzpicture}
\quad\quad
	\begin{tikzpicture}[scale={2.8/7}, baseline=(current bounding box.center)]
		\plotpermbox{1}{1}{6}{3};
		\plotpermbox{1}{4}{6}{6};
		\plotperm{4,3,5,2,6,1};
	\end{tikzpicture}
\quad\quad
	\begin{tikzpicture}[scale={2.8/7}, baseline=(current bounding box.center)]
		\plotpermbox{1}{1}{4}{6};
		\plotpermbox{5}{1}{6}{6};
		\plotperm{6,4,2,1,3,5};
	\end{tikzpicture}
\quad\quad
	\begin{tikzpicture}[scale={2.8/7}, baseline=(current bounding box.center)]
		\plotpermbox{1}{1}{6}{2};
		\plotpermbox{1}{3}{6}{6};
		\plotperm{6,1,5,2,4,3};
	\end{tikzpicture}
\quad\quad
	\begin{tikzpicture}[scale={2.8/7}, baseline=(current bounding box.center)]
		\plotpermbox{1}{1}{3}{6};
		\plotpermbox{4}{1}{6}{6};
		\plotperm{2,4,6,5,3,1};
	\end{tikzpicture}
\end{center}
\end{footnotesize}
\caption{The inversion graphs of all five of these permutations are isomorphic to the threshold graph depicted in Figures~\ref{fig-threshold-1} and \ref{fig-threshold-2}.}
\label{fig-five-ididid}
\end{figure}

This example illustrates one difficulty in the proof of Theorem~\ref{thm-ggc-lettericity}. Letting $\mathcal{X}$ denote the class of permutations that can be drawn on an $\Xfig$ and $\mathcal{V}$ denote the class of permutations that can be drawn on a $\Vfig$, we see that $\mathcal{V}$ is a proper subclass of $\mathcal{X}$, but that these two classes correspond to the same class of inversion graphs (the threshold graphs). In symbols,
\[
	\mathcal{V}\subsetneq \mathcal{X}\quad\text{but}\quad G_{\mathcal{V}}=G_{\mathcal{X}}.
\]
In particular, this example shows that we may need more cells in our geometric grid classes than letters in the corresponding class of letter graphs---the threshold graphs have lettericity $2$ while the permutations that can be drawn on an $\Xfig$ require $4$ nonempty cells. In our proof, we make no effort to conserve cells, and consequently this explosion will be quite dramatic.

\section{Encoding Geometric Grid Classes}
\label{sec-ggc-encoding}

In the previous section we defined geometric grid classes geometrically, as classes of permutations that can be drawn on the standard figures of their defining matrices. While that description suffices as a definition, it is not very convenient for actually proving theorems about geometric grid classes. Here we give an order-theoretic viewpoint of geometric griddability, and show how one direction of Theorem~\ref{thm-ggc-lettericity} follows readily. We note that this approach essentially follows Vatter and Waton~\cite{vatter:on-partial-well:}, and thus actually pre-dates the definition of geometric grid classes.

The matrix~$M$ of size~$t\times u$ is a \emph{partial multiplication matrix} if there are \emph{column} and \emph{row signs}
\[
	c_1,\dots,c_t,r_1,\dots,r_u\in\{1,-1\}
\]
such that every nonzero entry~$M(k,\ell)$ is equal to $c_kr_\ell$. Partial multiplication matrices are necessarily~$\zpm$ matrices. However, not every~$\zpm$~matrix is a partial multiplication matrix, as can be seen by considering the matrix $\tinymatrix{1&-1\\1&1}$. It is nevertheless true that every geometric grid class can be viewed as the geometric grid class of a partial multiplication matrix. To see this, let~$M$ be an arbitrary~$\zpm$~matrix, and form a new matrix~$M^{\times 2}$ by replacing each entry of~$M$ by an appropriate $2\times 2$ block, according to the substitution rules
\begin{figure}
\begin{footnotesize}
\begin{center}
	\begin{tikzpicture}[scale=1.4, baseline=(current bounding box.center)]
		\draw[step=1, darkgray, thick, line cap=round] (0,0) grid (2,2);
		\draw [very thick, line cap=round] (0,1)--(1,2);
		\draw [very thick, line cap=round] (0,0)--(1,1);
		\draw [very thick, line cap=round] (1,2)--(2,1);
		\draw [very thick, line cap=round] (1,0)--(2,1);
	\end{tikzpicture}
\quad\quad\quad
	\begin{tikzpicture}[scale=1.4, baseline=(current bounding box.center)]
		\draw[step=0.5, darkgray, thick, line cap=round] (0,0) grid (2,2);
		\draw [very thick, line cap=round] (0,1)--(1,2);
		\draw [very thick, line cap=round] (0,0)--(1,1);
		\draw [very thick, line cap=round] (1,2)--(2,1);
		\draw [very thick, line cap=round] (1,0)--(2,1);
	\end{tikzpicture}
\end{center}
\end{footnotesize}
\caption[The standard figure of a matrix and of the equivalent partial multiplication matrix.]{The standard figure of the matrix~$M=\tinymatrix{1&-1\\1&1}$ on the left and of~$M^{\times 2}=\tinymatrixbigp{0&1&-1&0\\1&0&0&-1\\0&1&0&1\\1&0&1&0}$ on the right.}
\label{fig-ggc-pmm-example}
\end{figure}
\[
	0\leftarrow\fnmatrix{0&0\\0&0},
	\quad
	1\leftarrow\fnmatrix{0&1\\1&0},
	\quad\text{and}\quad
	-1\leftarrow\fnmatrix{-1&0\\0&-1}.
\]
An example of this process is shown in Figure~\ref{fig-ggc-pmm-example}. As indicated by that figure, the standard figure of~$M^{\times 2}$ is simply a scaled copy of the standard figure of~$M$, and thus
\[
	\Geom(M^{\times 2})=\Geom(M).
%
%
\]
By definition, $M^{\times 2}$ is comprised of $2\times 2$ blocks, each obtained by replacing two or four entries of the matrix $\tinymatrix{-1&1\\1&-1}$ by $0$. As the matrix $\tinymatrix{-1&1\\1&-1}$ is a partial multiplication matrix, it follows that~$M^{\times 2}$ is a partial multiplication matrix. In fact, remembering that we index matrices in cartesian coordinates, we can use the same column and row signs, $c_k=(-1)^k$ and $r_\ell=(-1)^\ell$, for every matrix of the form~$M^{\times 2}$ (although we do not assume this). We may therefore restrict our attention to partial multiplication matrices, as recorded below.

\begin{proposition}[Albert, Atkinson, Bouvel, Ru\v{s}kuc, and Vatter~{\cite[Proposition~4.2]{albert:geometric-grid-:}}]
\label{prop-geom-pmm}
Every geometric grid class is the geometric grid class of a partial multiplication matrix.
\end{proposition}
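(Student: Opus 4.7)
The plan is to verify directly that the construction $M \mapsto M^{\times 2}$ described before the statement does the job; that is, to check separately that $\Geom(M^{\times 2}) = \Geom(M)$ and that $M^{\times 2}$ satisfies the partial multiplication property. Both parts reduce to elementary case checks on the three substitution blocks.

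For the equality of geometric grid classes, I would argue that the standard figure of $M^{\times 2}$, rescaled by the factor $1/2$, agrees with the standard figure of $M$ except possibly at finitely many isolated points. Indeed, for each entry $M(i,j) = 1$, the substitution rule $1 \leftarrow \fnmatrix{0&1\\1&0}$ places nonzero entries exactly at the two diagonal sub-cells $(2i-1, 2j-1)$ and $(2i, 2j)$ of the corresponding $2\times 2$ block, and the two increasing open segments they contribute are collinear with the original segment from $(i-1,j-1)$ to $(i,j)$ and together cover it except at their common endpoint. The analogous statement for $M(i,j) = -1$ uses the anti-diagonal sub-cells $(2i-1, 2j)$ and $(2i, 2j-1)$ and the decreasing segment from $(i-1,j)$ to $(i,j-1)$. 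Because a permutation is drawn on a standard figure using a finite \emph{generic} point set (no two entries sharing a coordinate), removing a single point from each diagonal has no effect on which permutations are drawable, so $\Geom(M^{\times 2}) = \Geom(M)$.

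For the partial multiplication property, I would take $c_k = (-1)^k$ for $1 \le k \le 2t$ and $r_\ell = (-1)^\ell$ for $1 \le \ell \le 2u$, as hinted in the text, and verify position by position that every nonzero entry of $M^{\times 2}$ equals $c_k r_\ell$. For a $+1$ block, the nonzero positions $(2i-1, 2j-1)$ and $(2i, 2j)$ give $c_k r_\ell = (-1)(-1) = 1$ and $(+1)(+1) = 1$, respectively. For a $-1$ block, the nonzero positions $(2i-1, 2j)$ and $(2i, 2j-1)$ give $c_k r_\ell = (-1)(+1) = -1$ and $(+1)(-1) = -1$, respectively. In every case the sign matches the entry, so $M^{\times 2}$ is a partial multiplication matrix with the specified row and column signs.

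The argument is largely bookkeeping, and the only thing to be careful about is the cartesian convention: the matrix $\fnmatrix{0&1\\1&0}$ is displayed top-down but indexed with its bottom row as row $1$, so that the $+1$'s really do sit on the sub-diagonal $(2i-1,2j-1), (2i,2j)$ and the $-1$'s on the sub-anti-diagonal $(2i-1,2j),(2i,2j-1)$. Getting the parities consistent with the indexing convention is the only subtle point in what is otherwise a direct verification.
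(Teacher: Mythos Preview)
Your proposal is correct and follows exactly the approach the paper uses: the paper establishes the proposition in the paragraphs immediately preceding it by introducing $M^{\times 2}$, noting that its standard figure is a scaled copy of that of $M$ so that $\Geom(M^{\times 2})=\Geom(M)$, and pointing out that the column and row signs $c_k=(-1)^k$, $r_\ell=(-1)^\ell$ witness the partial multiplication property. Your write-up simply fills in the details of these two verifications (including the care about the missing midpoints and the cartesian indexing), so there is nothing substantively different to compare.
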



With this result in hand, suppose that~$M$ is a partial multiplication matrix with column and row signs~$(c_k)$ and~$(r_\ell)$, respectively, and that $\pi\in\Grid(M)$. We fix a particular $M$-gridding of~$\pi$ and denote this \emph{gridded permutation} (the permutation~$\pi$ together with an $M$-gridding) by $\pi^\gridded$. While $\pi^\gridded$ is technically a permutation together with a choice of column and row divisions, in practice we often think of it as simply a permutation together with an assignment of its entries into cells. We now use the column and row signs to orient the entries of $\pi^\gridded$ in two different manners, one geometric and the other order-theoretic. We begin with the geometric interpretation.

We interpret the column sign~$c_k$ as specifying whether the entries of $\pi^\gridded$ in column~$k$ are oriented from left to right (if~$c_k=1$) or from right to left (if~$c_k=-1$). We similarly interpret the row sign $r_\ell$ as specifying whether the entries of $\pi^\gridded$ in row~$\ell$ are oriented from bottom to top (if $r_\ell=1$) or from top to bottom (if $r_\ell=-1$). We depict these orientations with arrows to the left of and below the corresponding rows and columns of the standard figure of the matrix~$M$, as shown in Figure~\ref{fig-ggc-example-with-signs}.

These orientations are consistent with the contents of each cell in the sense that $M(k,\ell)=c_k r_\ell$ if it is nonzero, because $M$ is a partial multiplication matrix. The column and row signs  allow us to further define a \emph{base point} in each nonempty cell of the standard figure, which we take to be the point from which all the arrows of that cell originate, and we further depict the line segment in each cell (upon which the points lie) as a ray pointing away from the base point.

This is the geometric orientation of the entries of $\pi^\gridded$. In the order-theoretic viewpoint, the column and row signs determine a linear orders on the entries of $\pi^\gridded$ in each column and row.

For each index $k$, we define a linear \emph{column order} $\colorder_k$ on all of the entries of~$\pi$ that lie in the~$k$\th column of~$\pi^\gridded$. The orientation of $\colorder_k$ is dictated by $c_k$, in the sense that we order these entries from left to right if $c_k=1$ and right to left if $c_k=-1$. Thus if both $\pi(i)$ and $\pi(j)$ lie in column~$k$ of~$\pi^\gridded$, then
\[
	i\colorder_k j
	\iff
	\begin{cases}
		\text{$i\le j$, if $c_k=1$, or}\\
		\text{$i\ge j$, if $c_k=-1$.}
	\end{cases}
\]
Analogously, for each index $\ell$, we define a linear \emph{row order} $\roworder_\ell$ on all of the entries of~$\pi$ that lie in the $\ell$\th row of~$\pi^\gridded$. This order is oriented from bottom to top if $r_\ell=1$ and from top to bottom if~${r_\ell=-1}$, and thus if both~$\pi(i)$ and~$\pi(j)$ lie in row~$\ell$ of~$\pi^\gridded$, then
\[
	i\roworder_\ell j
	\iff
	\begin{cases}
		\text{$\pi(i)\le \pi(j)$, if $r_\ell=1$, or}\\
		\text{$\pi(i)\ge \pi(j)$, if $r_\ell=-1$.}
	\end{cases}
\]
We refer to the column and row orders together as the \emph{local orders} of~$\pi^\gridded$. 

\begin{figure}
\begin{footnotesize}
\begin{center}
	\begin{tikzpicture}[scale=1.4, baseline=(current bounding box.center)]
		\draw[step=1, darkgray, thick, line cap=round] (0,0) grid (3,2);
		\draw [->, very thick, line cap=round] (0,2)--(0.95,1.05);
		\draw [->, very thick, line cap=round] (2,0)--(1.05,0.95);
		\draw [->, very thick, line cap=round] (2,2)--(1.05,1.05);
		\draw [->, very thick, line cap=round] (3,2)--(2.05,1.05);
		\draw [->, very thick, line cap=round] (3,0)--(2.05,0.95);
		\plotpartialperm{1.8/1.8, 0.3/1.7, 2.6/0.4, 2.5/1.5, 2.4/0.6, 1.2/0.8, 0.7/1.3};
		\node at (1.8,1.8) [below right] {$7$};
		\node at (0.3,1.7) [above right] {$6$};
		\node at (2.6,0.4) [below left] {$1$};
		\node at (2.5,1.5) [above left] {$5$};
		\node at (2.4,0.6) [below left] {$2$};
		\node at (1.2,0.8) [below left] {$3$};
		\node at (0.7,1.3) [above right] {$4$};
		\draw[->] (-0.1,1.9)--(-0.1,1.1);
		\draw[->] (-0.1,0.1)--(-0.1,0.9);
		\draw[->] (0.1,-0.1)--(0.9,-0.1);
		\draw[->] (1.9,-0.1)--(1.1,-0.1);
		\draw[->] (2.9,-0.1)--(2.1,-0.1);
	\end{tikzpicture}
\quad\quad\quad
	\begin{tikzpicture}[scale=1.4, baseline=(current bounding box.center)]
		\draw[step=1, darkgray, thick, line cap=round] (0,0) grid (3,2);
		\draw [->, very thick, line cap=round] (0,2)--(0.95,1.05);
		\draw [->, very thick, line cap=round] (2,0)--(1.05,0.95);
		\draw [->, very thick, line cap=round] (2,2)--(1.05,1.05);
		\draw [->, very thick, line cap=round] (3,2)--(2.05,1.05);
		\draw [->, very thick, line cap=round] (3,0)--(2.05,0.95);
		\plotpartialperm{1.8/1.8, 0.3/1.7, 2.6/0.4, 2.5/1.5, 2.4/0.6, 1.2/0.8, 0.7/1.3};
		\node at (1.8,1.8) [below right] {$4$};
		\node at (0.3,1.7) [above right] {$1$};
		\node at (2.6,0.4) [below left] {$7$};
		\node at (2.5,1.5) [above left] {$6$};
		\node at (2.4,0.6) [below left] {$5$};
		\node at (1.2,0.8) [below left] {$3$};
		\node at (0.7,1.3) [above right] {$2$};
		\draw[->] (-0.1,1.9)--(-0.1,1.1);
		\draw[->] (-0.1,0.1)--(-0.1,0.9);
		\draw[->] (0.1,-0.1)--(0.9,-0.1);
		\draw[->] (1.9,-0.1)--(1.1,-0.1);
		\draw[->] (2.9,-0.1)--(2.1,-0.1);
	\end{tikzpicture}
\end{center}
\end{footnotesize}
\caption{Two drawings of the geometric gridding of~$\pi(1)\pi(2)\cdots\pi(7)=6437251$ from Figure~\ref{fig-ggc-example}, with arrows to indicate column and row signs. In the drawing on the left, the points are labelled by their vertical positions (that is, the values $\pi(i)$ of the corresponding entries of the permutation), while in the drawing on the right, they are labelled by their horizontal positions (that is, the indices of the corresponding entries of the permutation).}
\label{fig-ggc-example-with-signs}
\end{figure}
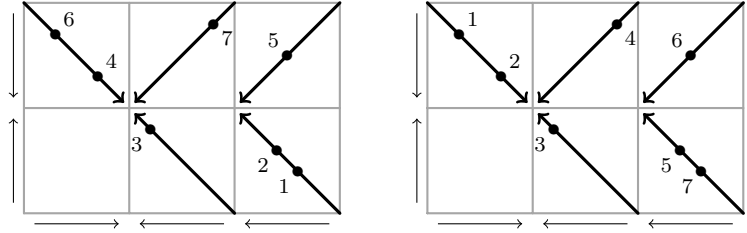

We pause to briefly consider these local orders for the gridding of our example in Figure~\ref{fig-ggc-example-with-signs}, with column and row signs indicated by the picture. Note that we typically discuss entries of permutations by \emph{value}, as on the left of Figure~\ref{fig-ggc-example-with-signs}, but we have defined the local orders by \emph{index}. This is why we have also supplied the drawing on the right of Figure~\ref{fig-ggc-example-with-signs}, which is identical to the one on the left, except that points are labelled by the index of their corresponding entry in the permutation.

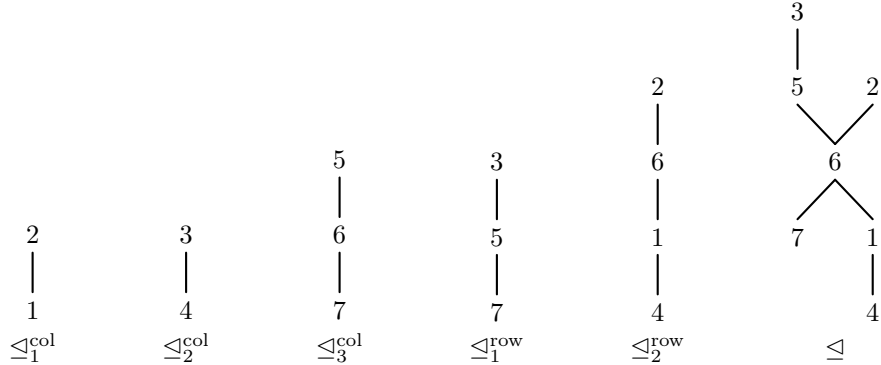
\begin{figure}
\begin{center}
	\begin{tikzpicture}[scale=0.5, every node/.style={}, baseline=(current bounding box.south)]
		\node (1) at (1,0) {$1$};
		\node (2) at (1,2) {$2$};
		\draw [thick, line cap=round] (1.north) -- (2.south);
		\node at (1,-1) {$\colorder_1$};
	\end{tikzpicture}
\quad\quad\quad
	\begin{tikzpicture}[scale=0.5, every node/.style={}, baseline=(current bounding box.south)]
		\node (4) at (1,0) {$4$};
		\node (3) at (1,2) {$3$};
		\draw [thick, line cap=round] (4.north) -- (3.south);
		\node at (1,-1) {$\colorder_2$};
	\end{tikzpicture}
\quad\quad\quad
	\begin{tikzpicture}[scale=0.5, every node/.style={}, baseline=(current bounding box.south)]
		\node (7) at (1,0) {$7$};
		\node (6) at (1,2) {$6$};
		\node (5) at (1,4) {$5$};
		\draw [thick, line cap=round] (7.north) -- (6.south);
		\draw [thick, line cap=round] (6.north) -- (5.south);
		\node at (1,-1) {$\colorder_3$};
	\end{tikzpicture}
\quad\quad\quad
	\begin{tikzpicture}[scale=0.5, every node/.style={}, baseline=(current bounding box.south)]
		\node (7) at (1,0) {$7$};
		\node (5) at (1,2) {$5$};
		\node (3) at (1,4) {$3$};
		\draw [thick, line cap=round] (7.north) -- (5.south);
		\draw [thick, line cap=round] (5.north) -- (3.south);
		\node at (1,-1) {$\roworder_1$};
	\end{tikzpicture}
\quad\quad\quad
	\begin{tikzpicture}[scale=0.5, every node/.style={}, baseline=(current bounding box.south)]
		\node (4) at (1,0) {$4$};
		\node (1) at (1,2) {$1$};
		\node (6) at (1,4) {$6$};
		\node (2) at (1,6) {$2$};
		\draw [thick, line cap=round] (4.north) -- (1.south);
		\draw [thick, line cap=round] (1.north) -- (6.south);
		\draw [thick, line cap=round] (6.north) -- (2.south);
		\node at (1,-1) {$\roworder_2$};
	\end{tikzpicture}
\quad\quad\quad
	\begin{tikzpicture}[scale=0.5, every node/.style={}, baseline=(current bounding box.south)]
		\node (7) at (1,2) {$7$};
		\node (5) at (1,6) {$5$};
		\node (3) at (1,8) {$3$};
		\node (4) at (3,0) {$4$};
		\node (1) at (3,2) {$1$};
		\node (6) at (2,4) {$6$};
		\node (2) at (3,6) {$2$};
		\draw [thick, line cap=round] (4.north) -- (1.south);
		\draw [thick, line cap=round] (1.north) -- (6.south);
		\draw [thick, line cap=round] (6.north) -- (2.south);
		\draw [thick, line cap=round] (7.north) -- (6.south);
		\draw [thick, line cap=round] (6.north) -- (5.south);
		\draw [thick, line cap=round] (5.north) -- (3.south);
		\node at (2,-1) {\phantom{$\roworder_2$}};
		\node at (2,-1) {$\trianglelefteq$};
	\end{tikzpicture}
\end{center}
\caption{Hasse diagrams showing the local orders for the gridded permutation shown in Figure~\ref{fig-ggc-example-with-signs}, together with the Hasse diagram of the transitive closure of their union on the far right.}
\label{fig-perm-hasse}
\end{figure}

As~$M$ has $3$ columns and $2$ rows, there are $5$ local orders, which we can read off as
\[
\begin{array}{lclclcl}
	1&\colorder_1&2,\\[2pt]
	4&\colorder_2&3,\\[2pt]
	7&\colorder_3&6&\colorder_3&5,\\[2pt]
	7&\roworder_1&5&\roworder_1&3,\\[2pt]
	4&\roworder_2&1&\roworder_2&6&\roworder_2&2.
\end{array}
\]

It is not an accident that the transitive closure of the union of the local orders happens to form a poset. For the gridded permutation from Figure~\ref{fig-ggc-example-with-signs}, the Hasse diagram of this poset is shown in Figure~\ref{fig-perm-hasse}, along with the Hasse diagrams of the local orders. To see that this is the case in general, we note that each local order actually orders the elements of~$\pi$ in increasing order of their distance to the base point of their respective cell; this is a feature of geometric griddings by partial multiplication matrices. 

To make this precise, let us say that a \emph{geometric realization} of the gridded permutation $\pi^\gridded$ is a drawing of the permutation~$\pi$ on the standard figure of~$M$ in such a way that every entry of~$\pi$ lies in the cell specified by~$\pi^\gridded$. Suppose that~$\pi^\gridded$ has a geometric realization, and fix one. For each index~$i$, let $d(i)$ denote the (euclidean) distance between the point of this geometric realization that corresponds to $\pi(i)$ and the base point of its cell (thus $d(i)$ lies between $0$ and $\sqrt{2}$ for every index~$i$). It is possible that points in independent cells (that is, cells that do not share a row or a column) have the same distance to their base points. If necessary, one can modify the realization to avoid this situation by perturbing the points in question---since the plot of the permutation is a generic set of points, one can easily check that this is always possible. We then have, if the entries~$\pi(i)$ and~$\pi(j)$ both lie in column~$k$ of~$\pi^\gridded$, that
\[
i\colorder_k j
\iff
d(i)\le d(j),
\]
while, analogously, if $\pi(i)$ and $\pi(j)$ both lie in row~$\ell$ of~$\pi^\gridded$, then
\[
i\roworder_\ell j
\iff
d(i)\le d(j).
\]

Recall that a set of linear orders is \emph{consistent} if the transitive closure of their union forms a poset, which happens if and only if their union does not contain a cycle. By our above observation, if $\pi^\gridded$ has a geometric realization, then the local orders of this gridded permutation must be consistent, because each of them agrees with the distance-to-base-point ordering, which is a linear ordering.

Conversely suppose that the local orders of the gridding $\pi^\gridded$ are consistent, and thus their union forms a poset, say $\trianglelefteq$. Fix a linear extension of this poset, which amounts to choosing a permutation $\psi$ of the indices of~$\pi$ such that
\[
	i\trianglelefteq j\implies \psi(i)\le \psi(j).
\]
Also choose an arbitrary set of distances
\[
	0<d_1<d_2<\cdots<d_n<\sqrt{2}.
\]
We may then obtain a geometric realization of this gridding of~$\pi$ by placing, for each index~$i$, a point corresponding to $\pi(i)$ at distance $d_{\psi(i)}$ from the base point in the appropriate cell of the standard figure of~$M$ (the cell in which $\pi(i)$ lies in~$\pi^\gridded$).

Returning to our example from Figure~\ref{fig-ggc-example-with-signs}, we have already drawn the Hasse diagram of the transitive closure of the union of the local orders. To obtain a linear extension of this poset, we may choose to place the points in the distance-to-base-point order $4$, $1$, $7$, $6$, $5$, $2$, $3$. This corresponds to choosing
\[
	\psi
	=
	\begin{pmatrix}
		1&2&3&4&5&6&7\\
		2&6&7&1&5&4&3
	\end{pmatrix},
\]
because then
\[
	\psi^{-1}
	=
	\begin{pmatrix}
		1&2&3&4&5&6&7\\
		4&1&7&6&5&2&3
	\end{pmatrix}.
\]
(In general, $\psi(i)$ tells us where $i$ lies in the linear extension, so reading $\psi^{-1}$ from left to right gives us the linear extension itself.) We have not specified how to choose the distances~$d_i$, but it is not hard to see that this choice is immaterial to the gridded permutation we are trying to produce.

In general, we have established the following result.

\begin{proposition}
\label{prop-geom-realization-consistent}
Let~$M$ be a partial multiplication matrix with fixed column and row signs, and let~$\pi^\gridded$ be an~$M$-gridding of a permutation $\pi\in\Grid(M)$. Then $\pi^\gridded$ has a geometric realization if and only if its local orders are consistent.	
\end{proposition}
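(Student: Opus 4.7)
The plan is to prove both directions by showing that each local order of $\pi^\gridded$ is nothing more than the restriction, to a single column or row, of a global ``distance-to-base-point'' linear order on all entries of the permutation.

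For the forward direction, I would fix a geometric realization and assign to each index $i$ the euclidean distance $d(i)\in(0,\sqrt{2})$ from the point representing $\pi(i)$ to the base point of its cell. Since $M$ is a partial multiplication matrix, within any fixed column $k$ the line segment in each nonempty cell of that column emanates from its base point in a direction whose horizontal component is dictated by $c_k$; hence entries with smaller $d$-values occur earlier (resp.\ later) in the permutation index order when $c_k=1$ (resp.\ $c_k=-1$). This is exactly the definition of $\colorder_k$, so $\colorder_k$ is the restriction of $\le$ on $d$-values to column $k$. The same argument with row signs and $y$-coordinates gives the analogous statement for $\roworder_\ell$. Two points in independent cells may share a $d$-value, but this never creates a cycle in the union of the local orders since no local order constrains them; moreover, a small perturbation (possible because the plot is a generic point set) can always be made to force the $d$-values to be distinct without disturbing the gridding, yielding a single global linear order that extends every local order. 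Consistency of the local orders follows.

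For the backward direction, assume the local orders are consistent, so their union has a transitive closure $\trianglelefteq$ that is a poset. Choose a linear extension, encoded as a permutation $\psi$ of $\{1,\dots,n\}$ with $i\trianglelefteq j\Rightarrow\psi(i)\le\psi(j)$, and any strictly increasing sequence $0<d_1<\cdots<d_n<\sqrt{2}$. Place, for each $i$, a point on the segment of the cell that $\pi^\gridded$ assigns to $\pi(i)$, at distance $d_{\psi(i)}$ from the base point of that cell. I would then verify two things: first, that the resulting set of $n$ points is generic, i.e.\ no two share a horizontal or vertical line; and second, that reading the points left-to-right and labeling by vertical rank recovers the original permutation $\pi$ together with its prescribed gridding. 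Both checks reduce to the same observation: entries in a common column (respectively row) are sorted left-to-right (respectively bottom-to-top) according to $\colorder_k$ (respectively $\roworder_\ell$), because of how the column and row signs orient the segments away from the base point, and $\psi$ was chosen to extend all of these local orders.

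The main obstacle is really a bookkeeping one rather than a conceptual one: making sure that the partial multiplication structure is what lets these ``distance to base point'' orders inside a column (or row) be compared across cells of that column (or row) consistently. This is where the hypothesis that $M(k,\ell)=c_kr_\ell$ whenever nonzero enters, guaranteeing that, inside any fixed column $k$, the horizontal projections of the segments in the nonempty cells all emanate from the base points in the same left-right direction (and symmetrically for rows). Once this is granted, both directions are straightforward: the forward direction uses a global linear order (the $d$-order) that visibly extends every local order, and the backward direction uses the linear extension $\psi$ of the consistent local orders to prescribe distances, with the remaining verifications being routine comparisons of horizontal and vertical coordinates.
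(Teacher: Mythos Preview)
Your proposal is correct and follows essentially the same approach as the paper: the forward direction uses the distance-to-base-point function $d(i)$ (with the same perturbation remark for independent cells) to exhibit a common linear order extending all local orders, and the backward direction chooses a linear extension $\psi$ of the transitive closure $\trianglelefteq$ together with distances $0<d_1<\cdots<d_n<\sqrt{2}$ to place points. Your write-up is in fact slightly more explicit than the paper's in that you spell out the two verifications needed in the backward direction (genericity of the resulting point set and recovery of $\pi$ with its gridding), which the paper leaves implicit.
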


This approach underlies the encoding of geometric grid classes by words, first presented in \cite{albert:geometric-grid-:}, and described briefly here. Define the \emph{cell alphabet} of the matrix~$M$ to be the set
\[
	\Sigma=\{a_{k\ell}\st M(k,\ell)\neq 0\},
\]
so that the letters $a_{k\ell}$ correspond to the nonzero entries of~$M$. We now describe a mapping $\varphi^\gridded$ from the language of words $\Sigma^\ast$ to the set of geometrically gridded members of $\Geom(M)$. For each word~$w=w(1)w(2)\cdots w(n)\in\Sigma^\ast$, we construct a permutation $\pi\in\Geom(M)$ as follows: choose a set $0<d_1<d_2<\cdots<d_n<\sqrt{2}$ of distances, and then for $1\le i\le n$, if $w(i)=a_{k\ell}$, place a point on the diagonal of the cell $(k,\ell)$ of the standard figure of~$M$ at (euclidean) distance $d_i$ from the base point of that cell.

As in our discussion above, the choice of distances is also immaterial here, and so the mapping $\varphi^\gridded$ is well-defined. Moreover, our previous discussion shows that $\varphi^\gridded$ is onto, by the following argument. Given any gridding $\pi^\gridded$ that has a geometric realization, Proposition~\ref{prop-geom-realization-consistent} shows that the local orders must be consistent. This means that the transitive closure of their union will have a linear extension, which we can take to define the permutation $\psi$ as before. We may then define the word~$w$ by the rule that
\[
	w(\psi(i))=a_{k\ell}
	\iff
	\text{$\pi(i)$ lies in cell $(k,\ell)$ in the gridding $\pi^\gridded$},
\]
and we see that $\varphi^\gridded(w)=\pi^\gridded$.

By simply removing the gridding from the gridded permutation $\varphi^\gridded(w)$, we obtain a permutation $\varphi(w)\in\Geom(M)$. It is easy to see that the mapping $\varphi$ is \emph{order-preserving}, in the sense that if $u$ is a not-necessarily-consecutive subword of $w$, then $\varphi(u)\le\varphi(w)$ as permutations. This observation and Higman's lemma~\cite{higman:ordering-by-div:} thereby imply that all geometrically griddable classes are well-quasi-ordered, a result of Albert, Atkinson, Bouvel, Ru\v{s}kuc, and Vatter~{\cite[Theorem~6.1]{albert:geometric-grid-:}}.

Our discussion shows that every different linear extension $\psi$ gives rise to a different word~$w$ that encodes the same gridded permutation $\pi^\gridded$. Indeed, part of the analysis of \cite{albert:geometric-grid-:} is the observation that changing the relative order in the linear extension $\psi$ of elements of independent cells is reflected, on the word level, by the set of encodings of a given gridded permutation $\pi^\gridded$ being described as a partially commutative monoid (also known as a trace monoid, such objects have been extensively studied, see Diekert's text \emph{Combinatorics on Traces}~\cite{diekert:combinatorics-o:} for example). That said, this further formalization is not necessary for our arguments.

Another detail that does not concern us in the present work is that members of $\Geom(M)$ may have many different~$M$-griddings with geometric realizations. For example, if we were to ask only that our words encode the \emph{permutation} shown in Figure~\ref{fig-ggc-example-with-signs}, then we would have many more, as it has several alternative griddings (for example, the point corresponding to $\pi(2)$ could shift to any of two other cells, or if that point stayed put, then the point corresponding to $\pi(3)$ could shift cells itself). Albert, Atkinson, Bouvel, Ru\v{s}kuc, and Vatter~{\cite[Theorem~8.1]{albert:geometric-grid-:}} show how to identify---in a regular manner---a distinguished geometric gridding for every member of $\Geom(M)$, thus establishing that every geometrically griddable class is in bijection with a regular language and thus has a rational generating function.

Our task here is different. Instead of trying to choose one geometric gridding for each member of a geometric grid class, we seek only to show that certain griddings \emph{are} geometric. Returning to this goal, the easier direction of Theorem~\ref{thm-ggc-lettericity}---which states that the graph class associated to a geometrically griddable permutation class has bounded lettericity---has already been proved in \cite{alecu:letter-graphs-a:abstract,alecu:letter-graphs-a:}, but we include another proof here, both for completeness and because it further illustrates some ideas used in the proof of the converse. Note that in the proof of this result, we are able to use the same word to encode both the permutation $\pi\in\Geom(M)$ and the letter graph satisfying~${\Gamma_D(w)\cong G_\pi}$.

\begin{theorem}[Alecu, Lozin, de Werra, and Zamaraev~\cite{alecu:letter-graphs-a:abstract,alecu:letter-graphs-a:}]
\label{thm-ggc-lettericity-half}
If the permutation class $\C$ is geometrically griddable, then the corresponding graph class $G_{\C}$ has bounded lettericity.
\end{theorem}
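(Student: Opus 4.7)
The plan is to build a single decoder $D$ on the cell alphabet of (a partial multiplication version of) a matrix $M$ with $\C\subseteq\Geom(M)$, and to show that the word encoding $\varphi$ from Section~\ref{sec-ggc-encoding} simultaneously produces the permutation $\varphi(w)\in\Geom(M)$ and the inversion graph $G_{\varphi(w)}\cong\Gamma_D(w)$. Since the alphabet depends only on $M$, this bounds the lettericity of $G_\C$ by $|\Sigma|$.

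First I would invoke Proposition~\ref{prop-geom-pmm} to replace $M$ by an equivalent partial multiplication matrix, and fix column and row signs $(c_k)$ and $(r_\ell)$. Set $\Sigma=\{a_{k\ell}\st M(k,\ell)\neq 0\}$ and let $\varphi\colon\Sigma^\ast\to\Geom(M)$ be the encoding described in Section~\ref{sec-ggc-encoding}. The key point is that, because a geometric realization of $\varphi(w)$ places the point corresponding to $w(i)$ on the diagonal of cell $(k,\ell)$ at distance $d_i$ from the base point (with $d_1<d_2<\cdots<d_n$), the adjacency in $G_{\varphi(w)}$ of the vertices corresponding to positions $i<j$ in $w$ depends only on the pair of letters $(w(i),w(j))$ and not on the remainder of $w$ nor on the choice of distances.

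I would verify this by a short case analysis that simultaneously defines the decoder $D$. Suppose $w(i)=a_{k\ell}$ and $w(j)=a_{k'\ell'}$ with $i<j$. If $k\neq k'$ and $\ell\neq\ell'$, then the two points sit in separate rows and columns, so their horizontal and vertical orders are dictated solely by the cell positions, and we put $(a_{k\ell},a_{k'\ell'})\in D$ precisely when one of the cells is strictly northwest of the other. If $k=k'$ but $\ell\neq\ell'$, the vertical order is determined by the rows, while the horizontal order of the two points within the shared column follows the column sign $c_k$ applied to the distances $d_i<d_j$; a direct check (using that the base points lie on the left when $c_k=1$ and on the right when $c_k=-1$) shows the inversion status depends only on $c_k$ and on which of $\ell,\ell'$ is larger, so the decoder entry is well-defined. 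The case $\ell=\ell'$, $k\neq k'$ is symmetric, governed by $r_\ell$. Finally, if $w(i)=w(j)=a_{k\ell}$, both points lie on the same diagonal segment, so the two indices are adjacent in $G_{\varphi(w)}$ iff $M(k,\ell)=c_kr_\ell=-1$, which decides whether to include $(a_{k\ell},a_{k\ell})$ in $D$.

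Having defined $D$ in this way from $M$ alone, one reads off that $\Gamma_D(w)\cong G_{\varphi(w)}$ for every word $w\in\Sigma^\ast$. Since every $\pi\in\C\subseteq\Geom(M)$ is $\varphi(w)$ for some $w$ (as argued in Section~\ref{sec-ggc-encoding}), each inversion graph in $G_\C$ admits a lettering over $\Sigma$ with fixed decoder $D$; thus the lettericity of $G_\C$ is at most $|\Sigma|$, which depends only on $M$. The only mildly delicate point in the argument is confirming that the within-cell and shared-row/shared-column subcases genuinely depend only on the letters (and not, for instance, on the intervening entries of $w$), but this is immediate from the monotonicity $i<j\iff d_i<d_j$ and the rigidity of the base points forced by $M$ being a partial multiplication matrix.
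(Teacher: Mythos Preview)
Your proposal is correct and follows essentially the same approach as the paper: both use Proposition~\ref{prop-geom-pmm} to pass to a partial multiplication matrix, take the cell alphabet, and carry out the same four-case analysis (same cell, independent cells, shared column, shared row) to define a decoder $D$ depending only on $M$ such that $\Gamma_D(w)\cong G_{\varphi(w)}$ for every $w$. The paper's write-up is slightly more explicit about the shared-column case (spelling out the decoder entries for $c_k=\pm1$), but the content is identical.
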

\begin{proof}
Suppose that $\C\subseteq\Geom(M)$ where~$M$ is a partial multiplication matrix with column signs~$(c_k)$ and row signs~$(r_\ell)$, and denote the cell alphabet of~$M$ by ${\Sigma=\{a_{k\ell}\st M(k,\ell)\neq 0\}}$. Let~${\pi\in\C}$ be arbitrary, and suppose that~$\pi=\varphi(w)$, where $w=w(1)\cdots w(n)\in\Sigma^\ast$. We prove the result by showing how to construct a decoder $D\subseteq\Sigma^2$ such that $\Gamma_D(w)\cong G_\pi$.

First, if we have~$M(k,\ell)=1$, then the entries in cell $(k,\ell)$ are increasing, and so we set $(a_{k\ell},a_{k\ell})\notin D$. Otherwise if~$M(k,\ell)=-1$, then the entries in cell $(k,\ell)$ are decreasing, and we let $(a_{k\ell},a_{k\ell})\in D$.

Next, if the cells $(k,\ell)$ and $(k',\ell')$ share neither a row nor a column (that is, if ${k\neq k'}$ and ${\ell\neq\ell'}$), then either every pair of entries with one from each of these two cells forms an inversion, in which case we set ${(a_{k\ell},a_{k'\ell'}),(a_{k'\ell'},a_{k\ell})\in D}$, or no such pair forms an inversion, in which case we set ${(a_{k\ell},a_{k'\ell'}),(a_{k'\ell'},a_{k\ell})\notin D}$.

Finally we must consider cells that share either a row or a column but not both. As the case where the cells share a row is symmetric, we consider entries lying in the cells $(k,\ell)$ and $(k,\ell')$, which share a column but lie in different rows. We may suppose without loss of generality that $\ell<\ell'$, so the entries of the cell $(k,\ell)$ lie below those of the cell $(k,\ell')$. Thus the only inversions between a pair of such entries consist of an entry in the cell $(k,\ell')$ lying to the left of an entry from the cell $(k,\ell)$. If $c_k=1$, then the entries of this column are encoded from left to right, so we set $(a_{k\ell'},a_{k\ell})\in D$ and $(a_{k\ell},a_{k\ell'})\notin D$. Otherwise, if $c_k=-1$, then these entries are encoded from right to left, so we set $(a_{k\ell},a_{k\ell'})\in D$ and $(a_{k\ell'},a_{k\ell})\notin D$.
\end{proof}

\section{Proof of the Main Result}
\label{sec-main-proof}

In this section we prove the other direction of Theorem~\ref{thm-ggc-lettericity}. The bulk of this proof is conducted on the level of an individual permutation, and at the heart of our approach to individual permutations is the following fact, formalized in Proposition~\ref{prop-one-perm-final-result}: if a permutation~$\pi$ has a monotone gridding by a matrix~$M$, then~$\pi$ is geometrically griddable by a matrix~$M^\ggridded$ whose size only depends on the size of~$M$ and the lettericity of its inversion graph $G_\pi$.

The proof of this fact is somewhat technical, but the concept behind it is quite transparent. A large amount of structural information about the permutation~$\pi$ is available to us. This information comes from two sources: the monotone gridding by~$M$ on the one hand, and the letter graph representation of $G_\pi$ on the other. The problem is that it is not a priori clear how to interpret this information simultaneously. There is, however, a straightforward solution: we take a ``common refinement'' of the available information, and use the word that encodes $G_\pi$ as a letter graph to define a linear extension $\psi$ on the entries of~$\pi$ as in the previous section. In practice, the way this is achieved is by first refining the alphabet and the decoder of the letter graph to ensure that each letter is used in only one cell. We then refine the gridding by adding vertical and horizontal lines surrounding the points encoded by each letter. The bulk of the work then consists of showing that this refinement behaves as we would like it to.

We begin our proof by fixing a bijection between the inversion graph of~$\pi$ and an isomorphic letter graph, and using this to define a linear extension $\psi$ on the entries of~$\pi$. Our initial results, Propositions~\ref{prop-distinguish-perms} and \ref{prop-separate-cells} in particular, establish that $\psi$ behaves as we would like it to ``locally''. Later results then broaden the scope until Proposition~\ref{prop-one-perm-final-result} concludes our analysis of a single permutation, at which point the proof of our main result is quickly completed in Section~\ref{subsec-end-of-proof}.

Suppose that~$\pi$ is a permutation of length $n$ and that
\[
	G_\pi\cong \Gamma_D(w)
\]
for some finite alphabet $\Sigma$, some decoder $D\subseteq\Sigma^2$, and some word~$w\in\Sigma^n$. We consider a particular isomorphism between $G_\pi$ and $\Gamma_D(w)$, which we denote by
\[
	\psi\st V(G_\pi) \to V(\Gamma_D(w)).
\]
One may notice that we have given this isomorphism the same name we gave the linear extension in Section~\ref{sec-ggc-encoding}. This is intentional, as the ultimate result of this section states that they play the same role%
\footnote{This is also the same role as played by the index correspondence used in Albert, Ru\v{s}kuc, and Vatter~\cite[Section~3]{albert:inflations-of-g:}.}.
We say that the entry~$\pi(i)$ \emph{corresponds} to the vertex $\psi(i)$ of $\Gamma_D(w)$ and is \emph{encoded} by the letter $w(\psi(i))$, which we denote symbolically by writing
\[
	\pi(i)
	\quad\longleftrightarrow\quad
	\text{vertex $\psi(i)$ in $\Gamma_D(w)$}
	\quad\longleftrightarrow\quad
	w(\psi(i)).
\]

%
%

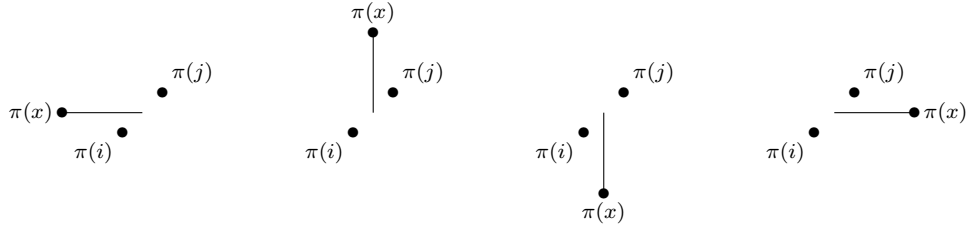
\begin{figure}
\begin{center}
	\begin{tikzpicture}[scale={2.8/10.5}, baseline=(current bounding box.center)]
		\node at (-1,-1) [below left] {\footnotesize $\pi(i)$};
		\node at (1,1) [above right] {\footnotesize $\pi(j)$};
		\plotpartialperm{-1/-1, 1/1, -4/0};
		\draw (-4,0)--(0,0);
		\node at (-4,0) [left] {\footnotesize $\pi(x)$};
		\node [color=white] at (0,4) [above] {\footnotesize $\pi(x)$};
		\node [color=white] at (0,-4) [below] {\footnotesize $\pi(x)$};
	\end{tikzpicture}
	\begin{tikzpicture}[scale={2.8/10.5}, baseline=(current bounding box.center)]
		\node at (-1,-1) [below left] {\footnotesize $\pi(i)$};
		\node at (1,1) [above right] {\footnotesize $\pi(j)$};
		\plotpartialperm{-1/-1, 1/1, 0/4};
		\draw (0,4)--(0,0);
		\node [color=white] at (-4,0) [left] {\footnotesize $\pi(x)$};
		\node at (0,4) [above] {\footnotesize $\pi(x)$};
		\node [color=white] at (0,-4) [below] {\footnotesize $\pi(x)$};
	\end{tikzpicture}
	\begin{tikzpicture}[scale={2.8/10.5}, baseline=(current bounding box.center)]
		\node at (-1,-1) [below left] {\footnotesize $\pi(i)$};
		\node at (1,1) [above right] {\footnotesize $\pi(j)$};
		\plotpartialperm{-1/-1, 1/1, 0/-4};
		\draw (0,-4)--(0,0);
		\node [color=white] at (-4,0) [left] {\footnotesize $\pi(x)$};
		\node [color=white] at (0,4) [above] {\footnotesize $\pi(x)$};
		\node at (0,-4) [below] {\footnotesize $\pi(x)$};
	\end{tikzpicture}
	\begin{tikzpicture}[scale={2.8/10.5}, baseline=(current bounding box.center)]
		\node at (-1,-1) [below left] {\footnotesize $\pi(i)$};
		\node at (1,1) [above right] {\footnotesize $\pi(j)$};
		\plotpartialperm{-1/-1, 1/1, 4/0};
		\draw (4,0)--(0,0);
		\node [color=white] at (-4,0) [left] {\footnotesize $\pi(x)$};
		\node [color=white] at (0,4) [above] {\footnotesize $\pi(x)$};
		\node [color=white] at (0,-4) [below] {\footnotesize $\pi(x)$};
		\node at (4,0) [right] {\footnotesize $\pi(x)$};
	\end{tikzpicture}
\quad\quad\quad
\end{center}
\caption{In all four cases, the entry~$\pi(x)$ separates $\pi(i)$ from $\pi(j)$.}
\label{fig-separates}
\end{figure}

As shown in Figure~\ref{fig-separates}, we say that the entries $\pi(i)$ and $\pi(j)$ of the permutation~$\pi$ are \emph{separated} if~$\pi$ contains a third entry lying between them either horizontally or vertically, but not both, and we say that this third entry separates $\pi(i)$ \emph{from} $\pi(j)$. The analogous concept in the graph context is that two vertices are \emph{distinguished} if there is a third vertex that is adjacent to one but not both of them. It is easy to see that the entries $\pi(i)$ and $\pi(j)$ are separated in~$\pi$ if and only if the vertices $i$ and $j$ are distinguished in $G_\pi$.

The connection between separation and lettering is given below.

\begin{proposition}
\label{prop-distinguish-perms}
Suppose that $\psi\st V(G_\pi) \to V(\Gamma_D(w))$ is an isomorphism. If~$\pi(i_1)$ and~$\pi(i_2)$ are encoded by the same letter and are separated by~$\pi(x)$, then ${\psi(i_1)<\psi(x)<\psi(i_2)}$ or the reverse.
\end{proposition}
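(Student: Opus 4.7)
The plan is to translate the permutation-level hypothesis into a statement about $\Gamma_D(w)$ via $\psi$, and then exploit the defining property of letter graphs: adjacency between two vertices $u<v$ of $\Gamma_D(w)$ is determined solely by the ordered pair $(w(u),w(v))$, independent of any other feature of the word. As observed just before the statement, the separation of $\pi(i_1)$ from $\pi(i_2)$ by $\pi(x)$ is equivalent to saying that in $G_\pi$ the vertex $x$ is adjacent to exactly one of $i_1$ and $i_2$. Since $\psi$ is a graph isomorphism, the analogous statement transports to $\Gamma_D(w)$: the vertex $\psi(x)$ is adjacent to exactly one of $\psi(i_1)$ and $\psi(i_2)$.

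The main step is then a short case analysis on the position of $\psi(x)$ relative to $\psi(i_1)$ and $\psi(i_2)$ in the word $w$. Writing $a=w(\psi(i_1))=w(\psi(i_2))$ (using the common-letter hypothesis) and $b=w(\psi(x))$, if $\psi(x)$ were less than both $\psi(i_1)$ and $\psi(i_2)$, then the adjacency of $\psi(x)$ to $\psi(i_k)$ would be governed, for each $k\in\{1,2\}$, by whether $(b,a)\in D$---the same criterion in both cases---so $\psi(x)$ would be adjacent to both or neither, contradicting the ``exactly one'' condition just derived. Symmetrically, if $\psi(x)$ exceeded both $\psi(i_1)$ and $\psi(i_2)$, then adjacency in both cases would be governed by whether $(a,b)\in D$, yielding the same contradiction. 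Since $\psi$ is a bijection and $x\neq i_1,i_2$, the three values $\psi(x)$, $\psi(i_1)$, $\psi(i_2)$ are pairwise distinct, so the only remaining configuration is that $\psi(x)$ lies strictly between $\psi(i_1)$ and $\psi(i_2)$, which is exactly the claimed conclusion.

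I do not anticipate a genuine obstacle here: the proposition is essentially a direct unpacking of the observation that, in a letter graph, two vertices bearing the same letter play interchangeable roles with respect to any vertex that sits wholly on one side of both of them in the word. The only care required is to interpret separation consistently in the graph $\Gamma_D(w)$ via the isomorphism $\psi$, rather than trying to reason directly from the plot of $\pi$.
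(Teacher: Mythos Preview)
Your proof is correct and follows essentially the same approach as the paper's: translate separation into distinguishing via the isomorphism $\psi$, then observe that if $\psi(x)$ lay on one side of both $\psi(i_1)$ and $\psi(i_2)$, the common letter would force identical adjacency, contradicting the distinguishing property. Your version is slightly more explicit in naming the letters $a$ and $b$ and spelling out the decoder pairs, but the argument is the same.
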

\begin{proof}
Since $\pi(x)$ separates $\pi(i_1)$ and $\pi(i_2)$, the vertex $\psi(x)$ distinguishes the vertices $\psi(i_1)$ and $\psi(i_2)$. We assume that $\psi(i_1)<\psi(i_2)$; if instead the reverse is true then the result follows by a symmetrical argument. If $\psi(x)<\psi(i_1)<\psi(i_2)$ or $\psi(i_1)<\psi(i_2)<\psi(x)$, then the vertex $\psi(x)$ would have to be adjacent to either both vertices $\psi(i_1)$ and $\psi(i_2)$ or neither, because they are encoded by the same letter.
\end{proof}

\subsection{An initial gridding}

We now suppose that~$\pi\in\Grid(M)$ for some~$\zpm$~matrix~$M$ of size~$t\times u$, and we denote by~$\pi^\gridded$ the permutation~$\pi$ together with a fixed~$M$-gridding.
Note that~$\pi^\gridded$ is only assumed to be a monotone gridding (if it is in fact geometric, then we are already done).

\begin{figure}
\begin{center}
	\begin{tikzpicture}[scale={2.8/10.5}, baseline=(current bounding box.center)]
		\plotpermbox{0.5}{0.5}{11}{4};
		\plotpermbox{0.5}{6.5}{11}{9.5};
		\plotpartialperm{1/1,3/9,5/3,7/7};
		\draw (3,9)--(3,2);
		\draw (7,7)--(7,4);
		\node at (1,1) [right] {\footnotesize $\pi(i_1)$};
		\node at (3,9) [right] {\footnotesize $\pi(i_2)$};
		\node at (5,3) [right] {\footnotesize $\pi(i_3)$};
		\node at (7,7) [right] {\footnotesize $\pi(i_4)$};
	\end{tikzpicture}
\end{center}
\caption{The situation to which Proposition~\ref{prop-separate-cells} applies.}
\label{fig-prop-separate-cells}
\end{figure}
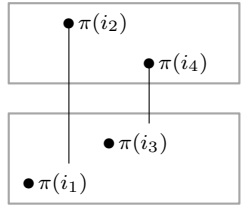

The following result, which follows routinely from Proposition~\ref{prop-distinguish-perms}, applies to the arrangement shown in Figure~\ref{fig-prop-separate-cells}. This result has a symmetric version that applies when the four entries alternate when read by value (in the version stated below, the entries alternate when read by position).

\begin{proposition}
\label{prop-separate-cells}
Suppose that $\psi\st V(G_\pi) \to V(\Gamma_D(w))$ is an isomorphism, that~$\pi^\gridded$ is a monotonically gridded permutation, and that we have
\begin{itemize}[topsep=-4pt, itemsep=-2pt]
	\item $i_1<i_2<i_3<i_4$,
	\item $\pi(i_1)$ and $\pi(i_3)$ are encoded by the same letter and lie together in one cell of~$\pi^\gridded$, and
	\item $\pi(i_2)$ and $\pi(i_4)$ are encoded by the same letter and lie together in a different cell of~$\pi^\gridded$.
\end{itemize}
Then $\psi(i_1)<\psi(i_2)<\psi(i_3)<\psi(i_4)$ or the reverse.
\end{proposition}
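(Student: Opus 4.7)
My plan is to argue that the hypothesis forces the two cells to share a column, then apply Proposition~\ref{prop-distinguish-perms} twice and chain the conclusions.

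\textbf{Reduction to a single case.} First I would observe that since the gridding comes with column divisions $1 = x_1 \leq \cdots \leq x_{t+1} \leq n+1$, all entries in column $k$ have indices in the half-open interval $[x_k, x_{k+1})$. Thus indices in two distinct columns occupy disjoint intervals, so one column's indices all precede the other's. With $i_1 < i_2 < i_3 < i_4$ alternating between the two cells by index, the entries $\pi(i_1), \pi(i_3)$ and the entries $\pi(i_2), \pi(i_4)$ cannot lie in different columns. Since the two cells are distinct, they must share a column but lie in different rows. In particular $\pi(i_2)$ and $\pi(i_4)$ lie strictly above, or strictly below, both $\pi(i_1)$ and $\pi(i_3)$ vertically.

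\textbf{Applying Proposition~\ref{prop-distinguish-perms} twice.} Now I would check that $\pi(i_2)$ separates $\pi(i_1)$ from $\pi(i_3)$: horizontally $i_1 < i_2 < i_3$ places it between them, while vertically it is outside the row range of the cell containing $\pi(i_1)$ and $\pi(i_3)$. Since $\pi(i_1)$ and $\pi(i_3)$ are encoded by the same letter, Proposition~\ref{prop-distinguish-perms} yields either $\psi(i_1) < \psi(i_2) < \psi(i_3)$ or $\psi(i_3) < \psi(i_2) < \psi(i_1)$. By exactly the same argument with roles reversed, $\pi(i_3)$ separates $\pi(i_2)$ from $\pi(i_4)$ (horizontally between by index, in a different row), so Proposition~\ref{prop-distinguish-perms} gives $\psi(i_2) < \psi(i_3) < \psi(i_4)$ or the reverse.

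\textbf{Chaining.} Finally, the two three-term inequalities overlap in the pair $\{\psi(i_2), \psi(i_3)\}$. If $\psi(i_2) < \psi(i_3)$, then the first inequality forces $\psi(i_1) < \psi(i_2)$ and the second forces $\psi(i_3) < \psi(i_4)$, giving $\psi(i_1) < \psi(i_2) < \psi(i_3) < \psi(i_4)$; the opposite orientation is symmetric. The whole argument is short, and the only conceptual point is the disjoint-column-intervals observation that rules out every case but the one where the two cells share a column. I do not expect any real obstacle beyond writing this carefully.
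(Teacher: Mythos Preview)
Your proof is correct and follows essentially the same route as the paper's: apply Proposition~\ref{prop-distinguish-perms} first with $\pi(i_2)$ separating $\pi(i_1)$ and $\pi(i_3)$, then with $\pi(i_3)$ separating $\pi(i_2)$ and $\pi(i_4)$, and chain the resulting three-term inequalities through the shared pair $\psi(i_2),\psi(i_3)$. The only difference is that you spell out the disjoint-column-intervals argument to force the two cells into a common column (hence different rows), whereas the paper asserts the separation step directly from ``$i_1<i_2<i_3$ and $\pi(i_2)$ lies in a different cell''; your explicit reduction is exactly what justifies that assertion.
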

\begin{proof}
Because $i_1<i_2<i_3$ and $\pi(i_2)$ lies in a different cell from $\pi(i_1)$ and $\pi(i_3)$, it separates these entries. Proposition~\ref{prop-distinguish-perms} then implies that either $\psi(i_1)<\psi(i_2)<\psi(i_3)$ or the reverse; as in our previous proof, if the reverse holds, then the result follows by a symmetrical argument.

Similarly, because $i_2<i_3<i_4$ and $\pi(i_3)$ lies in a different cell from $\pi(i_2)$ and $\pi(i_4)$, $\pi(i_3)$ separates these entries. Thus Proposition~\ref{prop-distinguish-perms} implies that $\psi(i_2)<\psi(i_3)<\psi(i_4)$, because the reverse would contradict the fact that $\psi(i_2)<\psi(i_3)$. It follows that $\psi(i_1)<\psi(i_2)<\psi(i_3)<\psi(i_4)$, as claimed.
\end{proof}


%
%
%
\subsection{Monotone intervals}
\label{subsec-mono-intervals}

There may be many choices for $\psi$. Indeed, if we consider the identity $\pi=12\cdots n$, the constant word~$w=\mathsf{aa}\cdots\mathsf{a}$, and an empty decoder $D=\emptyset$, then \emph{every} bijection ${\psi\st[n]\to[n]}$ is an isomorphism between $G_{12\cdots n}\cong\overline{K}_n$ and $\Gamma_D(w)\cong\overline{K}_n$. Thus we need to impose some conditions on~$\pi$ in order to get better control of the bijection $\psi$.

An \emph{increasing interval} in a permutation is a sequence of consecutive entries such that each of the entries is $1$ more than the previous entry, while a \emph{decreasing interval} is, analogously, a sequence of consecutive entries such that each is $1$ less than the previous entry. A \emph{monotone interval} is then either an increasing or a decreasing interval. The empty set and single entries by themselves are considered to be \emph{trivial} monotone intervals, while larger monotone intervals are \emph{nontrivial}. If the permutation~$\pi$ has a nontrivial monotone interval, then we can \emph{contract} this monotone interval into a single entry to obtain a permutation $\sigma$. Conversely, we can \emph{inflate} $\sigma$ (replacing the appropriate entry by a monotone interval) to obtain~$\pi$.

Suppose that~$\pi\in\Grid(M)$ were to have one or more nontrivial monotone intervals, and fix a particular~$M$-gridding $\pi^\gridded$ of it. We may contract its monotone intervals to obtain a permutation~${\sigma\in\Grid(M)}$ with corresponding~$M$-gridding $\sigma^\gridded$ (arising from $\pi^\gridded$). If $\sigma^\gridded$ has a geometric realization, then it follows that $\pi^\gridded$ has a geometric realization, so $\pi\in\Geom(M)$ whenever $\sigma\in\Geom(M)$. We record this fact below for later reference.

\begin{observation}
\label{obs-mono-interval-gridding}
Suppose that the~$M$-gridded permutation $\sigma^\gridded$ can be obtained by contracting monotone intervals of the~$M$-gridded permutation $\pi^\gridded$. If $\sigma^\gridded$ has a geometric realization, then~$\pi^\gridded$ also has a geometric realization.
\end{observation}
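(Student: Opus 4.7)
The plan is to begin with a fixed geometric realization of $\sigma^\gridded$ on the standard figure of $M$ and construct from it a geometric realization of $\pi^\gridded$ by locally ``expanding'' each point that corresponds to a contracted monotone interval of $\pi^\gridded$.

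First I would observe that any monotone interval of $\pi^\gridded$ lies entirely within a single cell, and its direction is forced: in an $M$-gridding, the entries in cell $(k,\ell)$ are increasing if $M(k,\ell)=1$ and decreasing if $M(k,\ell)=-1$, so an increasing monotone interval can only appear in a cell with $M(k,\ell)=1$, and a decreasing one only in a cell with $M(k,\ell)=-1$. Thus each entry of $\sigma^\gridded$ corresponds to a monotone interval of $\pi^\gridded$ of some length $m\ge 1$ whose direction matches the sign of the cell containing it.

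Next, for each point $p$ in the chosen geometric realization of $\sigma^\gridded$, corresponding to an entry whose pre-contraction interval has length $m$, I would choose a small open arc $N_p$ of the line segment of $p$'s cell, centered at $p$, short enough that the horizontal strip and the vertical strip spanned by $N_p$ contain no other point of the realization (this is possible because only finitely many points are involved and their positions form a generic set). Inside $N_p$, I would place $m$ distinct points along the line segment. Since the segment of a cell with $M(k,\ell)=1$ rises from lower-left to upper-right while that of a cell with $M(k,\ell)=-1$ falls from upper-left to lower-right, these $m$ points form an increasing or decreasing sequence of exactly the type needed to realize the corresponding monotone interval of $\pi^\gridded$.

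Finally, I would verify that reading off the permutation from the resulting point set recovers $\pi$ with the prescribed gridding. The smallness of each $N_p$ ensures that the block of $m$ points replacing $p$ behaves, with respect to every point outside $N_p$, exactly as $p$ did: their horizontal and vertical orderings relative to outside points are unchanged, and each new point still lies in the cell that housed $p$. Internally, each block contributes a consecutive monotone interval matching the one that was contracted. Hence contracting all the new blocks back to single points recovers precisely the realization of $\sigma^\gridded$ we started with, so the expanded drawing is a geometric realization of $\pi^\gridded$. I expect no real obstacle here, since the direction-compatibility observation in the second paragraph eliminates the only potential snag; everything else is a routine perturbation argument.
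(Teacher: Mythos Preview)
The paper offers no proof of this observation beyond the preceding remark that it ``follows readily,'' and your local-expansion argument on the standard figure is precisely the natural way to make that remark rigorous; it is essentially the approach the paper has in mind.

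One small correction: your opening assertion that \emph{every} monotone interval of $\pi^\gridded$ lies in a single cell is not true in general (for instance, take $\pi=12$ with $M$ the $1\times 2$ row $(1\ 1)$ and grid so that each entry sits in its own cell; the interval $\{1,2\}$ spans both cells). This is harmless, however. The hypothesis that contracting certain monotone intervals of the gridded permutation $\pi^\gridded$ yields the gridded permutation $\sigma^\gridded$ only makes sense when each contracted interval occupies a single cell of $\pi^\gridded$, and then, as you argue, the sign of that cell forces the direction of the interval. With that reading in place, your perturbation argument goes through unchanged.
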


Because of this observation, we may (and indeed, we need to) insist in what follows that~$\pi$ \emph{has only trivial monotone intervals}%
\footnote{We choose not to give such permutations a name, instead repeating this slightly cumbersome phrase, although these permutations have been given names in the literature. Atkinson and Stitt~\cite{atkinson:restricted-perm:wreath} refer to them as \emph{strongly irreducible}, while Brignall, Jel\'{\i}nek, Kyn\v{c}l, and Marchant~\cite{brignall:zeros-of-the-mo:} call them \emph{adjacency-free}.}.
We now begin to relate the position of entries of~$\pi$ to the order of their encodings in $w$, as in the result below.

\begin{proposition}
\label{prop-4-dist}
Suppose that $\psi\st V(G_\pi) \to V(\Gamma_D(w))$ is an isomorphism, that~$\pi^\gridded$ is a monotonically gridded permutation with only trivial monotone intervals, and that there are indices ${i_1<i_2<i_3}$ for which the entries $\pi(i_1)$, $\pi(i_2)$, and $\pi(i_3)$ all lie in the same cell of~$\pi^\gridded$ and are all encoded by the same letter. Then $\psi(i_1)<\psi(i_2)<\psi(i_3)$ or the reverse.
\end{proposition}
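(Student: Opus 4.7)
We argue by contradiction, supposing $\psi(i_1)$, $\psi(i_2)$, $\psi(i_3)$ are not in monotone order. Without loss of generality, assume the cell containing these three entries is increasing, so $\pi(i_1) < \pi(i_2) < \pi(i_3)$. The six orderings of $\psi(i_1), \psi(i_2), \psi(i_3)$ split into three pairs according to which of the three is the middle value, and the pair placing $\psi(i_2)$ in the middle is precisely the monotone case. It therefore suffices to rule out the other two possibilities.

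The following preliminary observation will be used repeatedly: because $\pi(i_1), \pi(i_2), \pi(i_3)$ all lie in one increasing cell, any entry $\pi(x)$ with position in $(i_1, i_2)$ and value in $(\pi(i_2), \pi(i_3))$---or, symmetrically, with position in $(i_2, i_3)$ and value in $(\pi(i_1), \pi(i_2))$---would itself lie in the same cell (its position and value fall within the cell's index and value ranges), violating the cell's monotonicity. So no such entries exist.

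The core of the proof is to produce a \emph{witness} entry $\pi(x)$ that simultaneously separates $\pi(i_1)$ from $\pi(i_2)$ and $\pi(i_1)$ from $\pi(i_3)$. To establish its existence, consider the alternative: if every value in $(\pi(i_1), \pi(i_2))$ occurs at a position in $(i_1, i_2)$ and every position in $(i_1, i_2)$ carries a value in $(\pi(i_1), \pi(i_2))$, then the preliminary observation forces the block $\pi(i_1), \pi(i_1+1), \ldots, \pi(i_2)$ to lie entirely in the cell. Since the cell is increasing, this block must be increasing with values exactly filling $[\pi(i_1), \pi(i_2)]$, producing a nontrivial monotone interval of $\pi$---contradicting the hypothesis. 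Otherwise the failure of this bijection, together with the preliminary observation, directly supplies a witness: either a position in $(i_1, i_2)$ whose value is below $\pi(i_1)$ or above $\pi(i_3)$, or a position outside $[i_1, i_3]$ whose value lies in $(\pi(i_1), \pi(i_2))$. In each of these four shapes one checks that the witness indeed separates both target pairs.

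Applying Proposition~\ref{prop-distinguish-perms} twice to the witness $\pi(x)$ places $\psi(x)$ in both $(\psi(i_1), \psi(i_2))$ and $(\psi(i_1), \psi(i_3))$. If $\psi(i_1)$ were the middle value, these two intervals would lie on opposite sides of $\psi(i_1)$ and have empty intersection---a contradiction. The symmetric argument, applied with the roles of $(\pi(i_1), \pi(i_2))$ and $(\pi(i_2), \pi(i_3))$ interchanged, produces a witness $\pi(y)$ that rules out the case where $\psi(i_3)$ is the middle value. The main obstacle is the careful existence argument for the witness, which is where the hypothesis of trivial monotone intervals is essential; once the witnesses are in hand, the contradictions follow immediately.
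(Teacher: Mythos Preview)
Your proof is correct and follows essentially the same approach as the paper: both arguments use the absence of nontrivial monotone intervals to produce separating entries, then apply Proposition~\ref{prop-distinguish-perms} to constrain the positions of $\psi(i_1),\psi(i_2),\psi(i_3)$. The only differences are organizational---you argue by contradiction (ruling out $\psi(i_1)$ or $\psi(i_3)$ as the middle value) whereas the paper cascades four applications of Proposition~\ref{prop-distinguish-perms} directly---and you are in fact more explicit than the paper about why a separator of $\pi(i_1),\pi(i_2)$ automatically separates $\pi(i_1),\pi(i_3)$ (your ``preliminary observation''), a step the paper uses but leaves implicit.
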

\begin{figure}
\begin{center}
	\begin{tikzpicture}[scale={2.8/8.5}, baseline=(current bounding box.center)]
		\plotpermbox{0.5}{0.5}{11.5}{7.5};
		\plotpermbox{0.9}{0.9}{7.6}{5.1};
		\plotpartialperm{1/1,3/3,5/5,2/7,9/4};
		\draw (2,7)--(2,2);
		\draw (9,4)--(4,4);
		\node at (1,1) [right] {\footnotesize $\pi(i_1)$};
		\node at (3,3) [right] {\footnotesize $\pi(i_2)$};
		\node at (5,5) [right] {\footnotesize $\pi(i_3)$};
		\node at (2,7) [right] {\footnotesize $\pi(x)$};
		\node at (9,4) [right] {\footnotesize $\pi(y)$};
	\end{tikzpicture}
\end{center}
\caption{An example of the situation in Proposition~\ref{prop-4-dist}.}
\label{fig-prop-4-dist}
\end{figure}
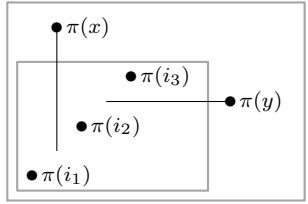

\begin{proof}
Because $\pi(i_1)$, $\pi(i_2)$, and $\pi(i_3)$ all lie in the same cell of~$\pi^\gridded$, they form a monotone subsequence. Because~$\pi$ has only trivial monotone intervals, there must be an entry~$\pi(x)$ separating $\pi(i_1)$ and $\pi(i_2)$ and an entry~$\pi(y)$ separating $\pi(i_2)$ and $\pi(i_3)$. Thus our situation resembles that shown in Figure~\ref{fig-prop-4-dist}.

Proposition~\ref{prop-distinguish-perms} implies that either $\psi(i_1)<\psi(x)<\psi(i_2)$ or the reverse. We assume this this inequality holds and prove that ${\psi(i_1)<\psi(i_2)<\psi(i_3)}$; if the reverse holds instead, then a symmetrical argument establishes that ${\psi(i_1)>\psi(i_2)>\psi(i_3)}$.

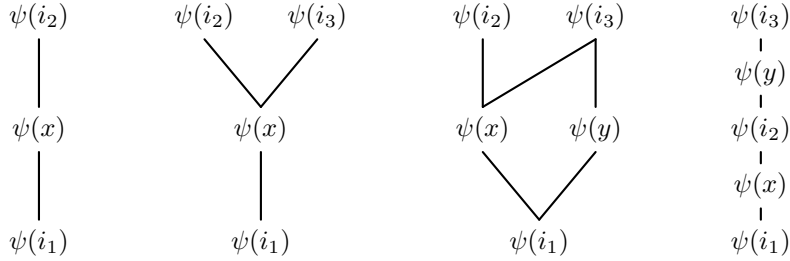
\begin{figure}
\begin{center}
	\begin{tikzpicture}[scale=0.75, every node/.style={}, baseline=(current bounding box.center)]
		\node (i) at (1,0) {$\psi(i_1)$};
		\node (x) at (1,2) {$\psi(x)$};
		\node (j) at (1,4) {$\psi(i_2)$};
		\draw [thick, line cap=round] (i.north) -- (x.south);
		\draw [thick, line cap=round] (x.north) -- (j.south);
	\end{tikzpicture}
\quad\quad\quad
	\begin{tikzpicture}[scale=0.75, every node/.style={}, baseline=(current bounding box.center)]
		\node (i) at (1,0) {$\psi(i_1)$};
		\node (x) at (1,2) {$\psi(x)$};
		\node (j) at (0,4) {$\psi(i_2)$};
		\node (k) at (2,4) {$\psi(i_3)$};
		\draw [thick, line cap=round] (i.north) -- (x.south);
		\draw [thick, line cap=round] (x.north) -- (j.south);
		\draw [thick, line cap=round] (x.north) -- (k.south);
	\end{tikzpicture}
\quad\quad\quad
	\begin{tikzpicture}[scale=0.75, every node/.style={}, baseline=(current bounding box.center)]
		\node (i) at (1,0) {$\psi(i_1)$};
		\node (x) at (0,2) {$\psi(x)$};
		\node (y) at (2,2) {$\psi(y)$};
		\node (j) at (0,4) {$\psi(i_2)$};
		\node (k) at (2,4) {$\psi(i_3)$};
		\draw [thick, line cap=round] (i.north) -- (x.south);
		\draw [thick, line cap=round] (i.north) -- (y.south);
		\draw [thick, line cap=round] (x.north) -- (j.south);
		\draw [thick, line cap=round] (x.north) -- (k.south);
		\draw [thick, line cap=round] (y.north) -- (k.south);
	\end{tikzpicture}
\quad\quad\quad
	\begin{tikzpicture}[scale=0.75, every node/.style={}, baseline=(current bounding box.center)]
		\node (i) at (1,0) {$\psi(i_1)$};
		\node (x) at (1,1) {$\psi(x)$};
		\node (j) at (1,2) {$\psi(i_2)$};
		\node (y) at (1,3) {$\psi(y)$};
		\node (k) at (1,4) {$\psi(i_3)$};
		\draw [thick, line cap=round] (i.north) -- (x.south);
		\draw [thick, line cap=round] (x.north) -- (j.south);
		\draw [thick, line cap=round] (j.north) -- (y.south);
		\draw [thick, line cap=round] (y.north) -- (k.south);
	\end{tikzpicture}
\end{center}
\caption{The evolving Hasse diagrams showing what we know about the ordering of the values of $\psi$ in the proof of Proposition~\ref{prop-4-dist}.}
\label{fig-perm-hasse-2}
\end{figure}

There are three more implications of Proposition~\ref{prop-distinguish-perms}, and Figure~\ref{fig-perm-hasse-2} shows how each of these implications updates our knowledge of the Hasse diagram of the indices $\psi(i_1)$, $\psi(i_2)$, $\psi(i_3)$, $\psi(x)$, and $\psi(y)$. We must have $\psi(i_1)<\psi(x)<\psi(i_3)$ because the reverse would contradict the fact that~$\psi(i_1)<\psi(x)$, leading to the second diagram of Figure~\ref{fig-perm-hasse-2}. We must then have $\psi(i_1)<\psi(y)<\psi(i_3)$ because the reverse would contradict the fact that~${\psi(i_1)<\psi(i_3)}$, leading to the third diagram of Figure~\ref{fig-perm-hasse-2}. Finally, we must have $\psi(i_2)<\psi(y)<\psi(i_3)$ because the reverse would contradict the fact that~${\psi(y)<\psi(i_3)}$, leading to the fourth diagram of Figure~\ref{fig-perm-hasse-2}.

We started by assuming (by symmetry) that $\psi(i_1)<\psi(i_2)$, and our final string of inequalities shows that $\psi(i_2)<\psi(i_3)$, completing the proof of the result.
\end{proof}

\subsection{Relettering}
\label{subsec-relettering}

To briefly recap our situation, we have a monotonically gridded permutation $\pi^\gridded$ of length $n$ with only trivial monotone intervals, an alphabet $\Sigma$, a decoder $D\subseteq\Sigma^2$, and a word~$w\in\Sigma^n$ such that~${G_\pi\cong \Gamma_D(w)}$. Further, we are considering a particular isomorphism $\psi\st V(G_\pi)\to V(\Gamma_D(w))$. We say that this collection of objects (the monotonically gridded permutation, the word, and the isomorphism) has the \emph{one cell per letter property} if for every letter~${a\in\Sigma}$, all entries encoded by~$a$ lie in the same cell of~$\pi^\gridded$. Put another way, this property says that if $w(\psi(i_1))=w(\psi(i_2))$, then~$\pi^\gridded(i_1)$ and~$\pi^\gridded(i_2)$ lie in the same cell.

We can always obtain the one cell per letter property at the expense of a larger alphabet, a process that we call \emph{relettering}. The first step is to expand our alphabet $\Sigma$ to a set
\[
	\Sigma^\gridded
	=
	\{(a,k,\ell)\st\text{$a\in\Sigma$ and~$M(k,\ell)\neq 0$}\}
\]
that includes cell information. Note that if $\pi^\gridded$ is a $t\times u$ gridding, then
\(
	| \Sigma^\gridded |
	\le
	tu \, |\Sigma|.
\)
Second, we add this information to our word~$w$, producing a new word~$w^\gridded\in(\Sigma^\gridded)^\ast$ in which
\[
	w^\gridded(\psi(i))=(a,k,\ell)
	\iff
	\text{$w(\psi(i))=a$ and $\pi(i)$ lies in cell $(k,\ell)$ of~$\pi^\gridded$.}
\]
Finally, we update the decoder to essentially ignore the extra information in the new letters, defining
\[
	D^\gridded
	=
	\{((a,i,j), (b,k,\ell))\st\text{$(a,b)\in D$,~$M(i,j)\neq 0$, and~$M(k,\ell)\neq 0$}\}.
\]

%
%

If we had an isomorphism $\psi\st V(G_\pi) \to V(\Gamma_D(w))$ before the relettering, then after the relettering, by a very slight abuse of notation, we view this also as an isomorphism $\psi\st V(G_\pi)\to V(\Gamma_{D^\gridded}(w^\gridded))$. We may do so because the relettering does not affect the letter graph or its labels at all: we have
\[
	V(\Gamma_{D^\gridded}(w^\gridded))=V(\Gamma_D(w))=\{1,2,\dots,n\}
\]
and
\[
	ij\in E(\Gamma_{D^\gridded}(w^\gridded))
	\iff
	ij\in E(\Gamma_D(w)).
\]

%
%

As we are assuming that~$\pi$ has only trivial monotone intervals, once we have done this relettering (to achieve the one cell per letter property), Proposition~\ref{prop-4-dist} implies that entries of the same letter ``read in the same order'' in the following sense. For each letter~$a\in\Sigma^\gridded$, the entries encoded by~$a$ all lie in a single cell of~$\pi^\gridded$. If these entries have indices ${i_1<i_2<\cdots<i_s}$, then we have
\[
	\psi(i_1)<\psi(i_2)<\cdots<\psi(i_s)
	\quad\text{or}\quad
	\psi(i_s)<\psi(i_{s-1})<\cdots<\psi(i_1).
\]
This follows from Proposition~\ref{prop-4-dist} for~$s\ge 3$, and is trivial for~$s\le 2$.

If ${\psi(i_1)<\cdots<\psi(i_s)}$, then these entries are encoded from left to right in the word~$w$, while if~${\psi(i_s)<\cdots<\psi(i_1)}$, then they are encoded from right to left. We refer to this order as the \emph{horizontal reading order of the letter}. If there is only one entry encoded by some letter, then we may choose the horizontal reading order arbitrarily to be either left-to-right or right-to-left.

Moreover, since
the entries encoded by $a$ 
form a monotone subsequence of~$\pi$ (because they lie together in a single cell of the monotonically gridded permutation $\pi^\gridded$),
they are also encoded in $w$ either from bottom to top or from top to bottom. We refer to this order as the \emph{vertical reading order of the letter}. (Again, if there is only one such entry, then its vertical reading order may be chosen arbitrarily.) Note that the vertical (resp., horizontal) reading order of a letter is completely determined by its horizontal (resp., vertical) reading order together with the sign of the cell in which the entries encoded by that letter lie; for example, if the entries encoded by some letter lie in a decreasing cell and are encoded from bottom to top, then this means that they are also encoded from right to left.

We refer to the combination of the horizontal and vertical reading orders of a letter as simply the \emph{reading order of the letter}. By our comments above, if we have the one cell per letter property, then the reading order of a letter is completely determined by its horizontal or vertical reading order and the sign of the cell in which the entries encoded by that letter lie.

\subsection{Refining the gridding}

Our next step is to refine the monotone gridding $\pi^\gridded$, by slicing it with additional vertical and horizontal lines, in a manner similar to that employed in \cite[Section~5]{vatter:small-permutati:}. The immediate aim of this additional slicing is to obtain the property in Proposition~\ref{prop-splitting-entries}, which allows us to appeal to Proposition~\ref{prop-separate-cells} in proving the subsequent results of this section.

The \emph{rectangular hull} of a set of points in the plane is the smallest axis-parallel rectangle containing them. Suppose that we have our monotone gridded permutation $\pi^\gridded$ and that we have relettered in order to achieve the one cell per letter property. For each letter $(a,k,\ell)\in\Sigma^\gridded$, let $R_{(a,k,\ell)}$ denote the rectangular hull of the entries of~$\pi^\gridded$ encoded by that letter. We now refine the gridding by adding, for each rectangle $R_{(a,k,\ell)}$, four lines that lie just outside its four sides. Thus for each letter~${(a,k,\ell)\in\Sigma^\gridded}$, we add two new horizontal lines and two new vertical lines. Consequently, because~$\pi^\gridded$ is a~$t\times u$ gridding, the maximum possible size of the new gridding is ${(t+2|\Sigma^\gridded|)}\times {(u+2|\Sigma^\gridded|)}$, which is at most $t(1+2u|\Sigma|)\times u(1+2t|\Sigma|)$ by our previous bound on~$|\Sigma^\gridded|$.

As the original gridding is denoted by the sharp symbol~$\raisebox{1pt}{\scalebox{1.5}{\gridded}}$, we use the double sharp symbol~$\raisebox{-0.4pt}{\scalebox{1.5}{\ggridded}}$ to denote the \emph{regridded permutation}~$\pi^\ggridded$. As stated above, the regridding gives us the following property.

\begin{proposition}
\label{prop-splitting-entries}
Let $\pi^\gridded$ and $\pi^\ggridded$ be as above and suppose that for some indices $i_2<i_3$, the entries~$\pi(i_2)$ and~$\pi(i_3)$ lie in the same column of $\pi^\ggridded$ but are encoded by different letters of $\Sigma^\gridded$. Then there is an index~$i_1$ satisfying ${i_1<i_2<i_3}$ so that $\pi(i_1)$ and $\pi(i_3)$ are encoded by the same letter of~$\Sigma^\gridded$.
\end{proposition}
\begin{proof}
Let~$\pi(i_1)$ denote the leftmost entry of $\pi^\gridded$ that is encoded by the same letter of~$\Sigma^\gridded$ as~$\pi(i_3)$. In forming the regridded permutation $\pi^\ggridded$, we sliced by a line that lies just to the left of~$\pi(i_1)$. Since~$\pi(i_2)$ and~$\pi(i_3)$ lie in the same column of $\pi^\ggridded$, this line must lie to the left of $\pi(i_2)$, and thus we must have ${i_1<i_2<i_3}$.
\end{proof}

It is worth noting that the entries $\pi(i_1)$ and $\pi(i_3)$ of Proposition~\ref{prop-splitting-entries} must lie in the same cell of $\pi^\gridded$ since they are encoded by the same letter of~$\Sigma^\gridded$, but that they need not lie in the same cell of~$\pi^\ggridded$.

We now consider the reading orders of letters in the regridded permutation $\pi^\ggridded$. Note that~$\pi^\ggridded$ likely no longer has the one cell per letter property%
\footnote{We could of course enlarge of our alphabet again to regain the one cell per letter property, but there is no need.},
but that property was only necessary to obtain reading orders for the letters of~$\Sigma^\gridded$. In the regridded permutation~$\pi^\ggridded$, we retain the reading orders of the letters of~$\Sigma^\gridded$ from the relettering of Section~\ref{subsec-relettering}. Further note that if the reading order of a letter of~$\Sigma^\gridded$ was chosen arbitrarily before, then that was because the letter encoded a single entry, and so in~$\pi^\ggridded$, that entry is isolated in a cell by itself that shares neither a column nor a row with another nonempty cell. We now argue that these reading orders are compatible with each other.

\begin{proposition}
\label{prop-two-letters-one-direction-column}
Suppose that~$\pi$ has only trivial monotone intervals and that $w^\gridded$, $\psi$, $\pi^\gridded$ and $\pi^\ggridded$ are as above. Then the letters of~$\Sigma^\gridded$ encoding any pair of entries lying in the same column of~$\pi^\ggridded$ share the same horizontal reading order. Thus, by symmetry, the letters of~$\Sigma^\gridded$ encoding any pair of entries lying in the same row of~$\pi^\ggridded$ also share the same vertical reading order.
\end{proposition}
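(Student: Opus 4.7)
Given two letters $a, b \in \Sigma^\gridded$ (assume $a \ne b$ and both encoding at least two entries, the single-entry case being trivial since we are free to choose reading directions for such letters) whose entries share a column $C$ of $\pi^\ggridded$, the strategy is to locate four indices $p_1 < p_2 < p_3 < p_4$ in $\pi$ with $\pi(p_1), \pi(p_3)$ encoded by one of the letters and $\pi(p_2), \pi(p_4)$ encoded by the other --- an alternating $abab$ or $baba$ pattern --- and then invoke Proposition~\ref{prop-separate-cells} to conclude that $\psi$ orders these four indices monotonically, which in turn forces the horizontal reading orders of $a$ and $b$ to coincide.

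The first step is a geometric analysis producing the alternating pattern. Since the four vertical lines we added just outside $R_a$ and $R_b$ cannot lie strictly inside $C$ (no vertical line of $\pi^\ggridded$ does), and both $R_a$ and $R_b$ contain at least one entry with $x$-coordinate in $C$, an elementary case analysis on the horizontal extents of $R_a$ and $R_b$ rules out the possibility that they are disjoint or that one is cleanly nested inside the other; in either of these configurations the added lines would subdivide $C$ further, contradicting that $C$ is itself a single column of $\pi^\ggridded$. The surviving configurations force the indices of a suitable selection of $a$- and $b$-entries to interleave, yielding the desired $p_1 < p_2 < p_3 < p_4$. If $a$ and $b$ lie in different cells of $\pi^\gridded$, then Proposition~\ref{prop-separate-cells} applies directly to this pattern, giving $\psi(p_1) < \psi(p_2) < \psi(p_3) < \psi(p_4)$ or its reverse; either way, the horizontal reading orders of $a$ and $b$ coincide.

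The remaining case, in which $a$ and $b$ are distinct letters of $\Sigma^\gridded$ but occupy the same cell $(k, \ell)$ of $\pi^\gridded$, is the principal technical obstacle because Proposition~\ref{prop-separate-cells} is not directly applicable: its hypothesis requires the two pairs to lie in different cells. The plan here is to leverage the assumption that $\pi$ has only trivial monotone intervals, in the spirit of the proof of Proposition~\ref{prop-4-dist}: this condition guarantees that the $a$-pair $\{p_1, p_3\}$ and the $b$-pair $\{p_2, p_4\}$, each living in the monotone cell $(k, \ell)$, admit separators $\pi(y)$ and $\pi(y')$ that must lie outside $(k, \ell)$ (since inside a monotone cell no entry can separate two others). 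Proposition~\ref{prop-distinguish-perms} then sandwiches $\psi(y)$ between $\psi(p_1)$ and $\psi(p_3)$ and $\psi(y')$ between $\psi(p_2)$ and $\psi(p_4)$. The remaining work is to combine these two sandwiching constraints --- for instance by arranging for $\pi(y)$ and $\pi(y')$ to land in a common cell distinct from $(k, \ell)$, enabling a second application of Proposition~\ref{prop-separate-cells} to an auxiliary alternating four-entry pattern built from $\{p_1, p_3\}$ and $\{y, y'\}$ --- to align the $\psi$-orders of the $a$- and $b$-entries and thereby force their horizontal reading orders to match. The symmetric statement for rows then follows by transposition.
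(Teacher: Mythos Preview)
Your treatment of the case where the two letters occupy different cells of $\pi^\gridded$ matches the paper: you produce an alternating four-entry pattern and invoke Proposition~\ref{prop-separate-cells}. The paper locates the outer entries $i_1$ and $i_4$ by precisely the observation you sketch, namely that the vertical lines framing $R_a$ and $R_b$ cannot fall strictly inside the shared column $C$ of $\pi^\ggridded$.

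The genuine gap is in your same-cell case. You propose to find a separator $\pi(y)$ for $\{p_1,p_3\}$ and a separator $\pi(y')$ for $\{p_2,p_4\}$ and then ``combine these two sandwiching constraints''. But those constraints are a priori independent: knowing that $\psi(y)$ lies between $\psi(p_1)$ and $\psi(p_3)$ says nothing about $\psi(p_2)$ or $\psi(p_4)$, and vice versa. Your suggested bridge---arranging for $y$ and $y'$ to land in a common cell and reapplying Proposition~\ref{prop-separate-cells} to $\{p_1,p_3,y,y'\}$---does not work as stated. There is no reason the two separators should lie in the same cell of $\pi^\gridded$, and even if they did, Proposition~\ref{prop-separate-cells} further requires that $\pi(y)$ and $\pi(y')$ be encoded by the \emph{same letter}, which nothing guarantees. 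Without a concrete mechanism linking the two sandwiches, the argument does not close.

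The paper sidesteps this difficulty with a transitivity trick. Rather than separating the two like-letter pairs, it takes a single separator $\pi(x)$ for the mixed pair $\pi(i_2),\pi(i_3)$ (available because $\pi$ has only trivial monotone intervals). This $\pi(x)$ necessarily lies outside the common cell of $\pi^\gridded$, yet---because it sits horizontally or vertically between $\pi(i_2)$ and $\pi(i_3)$---it lies in the same column or the same row of $\pi^\ggridded$ as both of them. The already-proven different-cell case then applies to each of the pairs $(\pi(i_2),\pi(x))$ and $(\pi(i_3),\pi(x))$, forcing the reading order of the letter encoding $\pi(x)$ to agree with those of both $a$ and $b$; transitivity finishes. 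If the agreement so obtained is vertical (because $\pi(x)$ shared a row rather than a column), the fact that $a$ and $b$ occupy the same cell of $\pi^\gridded$ converts it to horizontal agreement as well.
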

\begin{proof}
Suppose that for some indices~${i_2<i_3}$, the entries~$\pi(i_2)$ and~$\pi(i_3)$ lie in the same column of~$\pi^\ggridded$. We want to show that the letters encoding these two entries have the same horizontal reading order. If these entries are encoded by the same letter, then the result is trivial, so we assume that they are encoded by different letters.

First suppose that~$\pi(i_2)$ and~$\pi(i_3)$ lie in different cells of~$\pi^\gridded$. Proposition~\ref{prop-splitting-entries} shows directly that there is an index $i_1$ satisfying ${i_1<i_2<i_3}$ so that $\pi(i_1)$ and $\pi(i_3)$ are encoded by the same letter of~$\Sigma^\gridded$. By symmetry, it also implies that there is an index $i_4$ satisfying ${i_2<i_3<i_4}$ so that $\pi(i_2)$ and $\pi(i_4)$ are encoded by the same letter of~$\Sigma^\gridded$. As we are assuming that~$\pi(i_2)$ and~$\pi(i_3)$ do not share a cell in~$\pi^\gridded$, the hypotheses of Proposition~\ref{prop-separate-cells} are satisfied. Thus ${\psi(i_1)<\psi(i_2)<\psi(i_3)<\psi(i_4)}$ or the reverse, and this proves that the horizontal reading orders of the corresponding letters agree. By symmetry, we have the analogous statement about entries that lie in the same row of~$\pi^\ggridded$ but not the same cell of~$\pi^\gridded$.

To complete the proof, suppose that the entries $\pi(i_2)$ and $\pi(i_3)$ lie in the same cell of~$\pi^\gridded$. Because~$\pi$ has only trivial monotone intervals, these two entries must be separated, say by the entry~$\pi(x)$. Thus $\pi(x)$ must lie in a different cell of~$\pi^\gridded$ from $\pi(i_2)$ and $\pi(i_3)$, but in the same column or row of~$\pi^\gridded$ as these entries.

If $\pi(x)$ lies in the same column as~$\pi(i_2)$ and $\pi(i_3)$, then, by the previous case applied to the pair $\pi(x)$ and $\pi(i_2)$, we see that the horizontal reading order of the letter encoding $\pi(x)$ agrees with the horizontal reading order of the letter encoding $\pi(i_2)$. Similarly, the horizontal reading order of the letter encoding $\pi(x)$ agrees with the horizontal reading order of the letter encoding $\pi(i_3)$. Thus, all three horizontal reading orders agree. Because $\pi(i_2)$ and $\pi(i_3)$ lie in the same cell of~$\pi^\gridded$, this forces the vertical reading orders of the letters encoding these entries to agree as well. A symmetrical argument handles the case where $\pi(x)$ lies in the same row as~$\pi(i_2)$ and $\pi(i_3)$. 
\end{proof}

With our next result we show that the regridded permutation $\pi^\ggridded$ has a geometric realization, which implies that~$\pi$ itself lies in the geometric grid class of a matrix of bounded size.

\begin{proposition}
\label{prop-one-perm-final-result}
Suppose that~$\pi$ has only trivial monotone intervals, that ${\pi\in\Grid(M)}$ for a~$\zpm$ matrix~$M$ of size~$t\times u$, and that $G_\pi\cong \Gamma_D(w)$ where $|\Sigma|\le r$, $D\subseteq\Sigma^2$, and $w\in\Sigma^\ast$. Then there is a~$\zpm$~matrix~$M^\ggridded$ of size at most ${t(1+2ur)}\times {u(1+2tr)}$ for which~$\pi\in\Geom(M^\ggridded)$.
\end{proposition}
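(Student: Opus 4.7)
The plan is to apply the construction developed throughout Section~\ref{sec-main-proof} and then verify that the resulting regridded permutation $\pi^\ggridded$ admits a geometric realization, so that $\pi \in \Geom(M^\ggridded)$ for an appropriate matrix $M^\ggridded$. The first step is to carry out the setup: reletter over an alphabet $\Sigma^\gridded$ of size at most $tur$ to achieve the one cell per letter property, then refine $\pi^\gridded$ by adding lines just outside the four sides of each rectangular hull $R_{(a,k,\ell)}$. The discussion in Section~\ref{sec-main-proof} already shows that the resulting gridding has at most $t(1+2ur)$ columns and $u(1+2tr)$ rows. Take $M^\ggridded$ to be the $\zpm$ matrix of this shape whose nonzero entries record the monotonicity of the entries in the corresponding cells (inherited from $M$, since refinement only subdivides cells and does not merge them).

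Next I would use Proposition~\ref{prop-two-letters-one-direction-column} to equip $M^\ggridded$ with column and row signs: define $c_k = 1$ if the common horizontal reading order of all letters appearing in column $k$ of $\pi^\ggridded$ is left-to-right and $c_k = -1$ otherwise, and define $r_\ell$ analogously from the common vertical reading order in row $\ell$ (choosing arbitrarily when a column or row is empty). To see that $M^\ggridded$ is then a partial multiplication matrix, one picks any letter encoding an entry of a given nonzero cell $(k,\ell)$ and observes that its horizontal and vertical reading orders must be compatible with the monotonicity of the cell, yielding $M^\ggridded(k,\ell) = c_k r_\ell$.

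The heart of the argument is to show that the local orders of $\pi^\ggridded$ with respect to these signs are consistent, since then Proposition~\ref{prop-geom-realization-consistent} furnishes a geometric realization. The plan is to prove the stronger statement that every relation $i \colorder_k j$ or $i \roworder_\ell j$ satisfies $\psi(i) \le \psi(j)$, so that the union of all local orders is contained in the linear order on indices induced by $\psi$ and is therefore acyclic. For a column relation, suppose $i < j$ lie in column $k$ with $c_k = 1$. If $\pi(i)$ and $\pi(j)$ share an encoding letter, left-to-right horizontal reading gives $\psi(i) < \psi(j)$ immediately. Otherwise I would recycle the construction from the proof of Proposition~\ref{prop-two-letters-one-direction-column}: find indices $i_1 < i$ and $i_4 > j$ with $\pi(i_1)$ encoded by the letter of $\pi(j)$ and $\pi(i_4)$ by the letter of $\pi(i)$, apply Proposition~\ref{prop-separate-cells} to obtain a four-element $\psi$-chain, and then pin down its orientation using the known left-to-right reading order of the letter encoding $\pi(i_1)$ and $\pi(j)$, forcing $\psi(i) < \psi(j)$. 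A symmetric argument handles row relations.

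The hard part will be this final step: the subcase in which $\pi(i)$ and $\pi(j)$ happen to lie in the same cell of $\pi^\gridded$ must be handled carefully by inserting a separating entry $\pi(x)$ produced by the trivial-monotone-intervals hypothesis (exactly as in the closing paragraph of the proof of Proposition~\ref{prop-two-letters-one-direction-column}), and every resulting inequality must be checked to be consistent across different letters and different cells. Once this consistency of local orders is in hand, Proposition~\ref{prop-geom-realization-consistent} immediately supplies the desired geometric realization of $\pi^\ggridded$ and completes the proof.
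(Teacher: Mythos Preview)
Your proposal is correct and follows essentially the same route as the paper: reletter to get one cell per letter, refine via the rectangular hulls, use Proposition~\ref{prop-two-letters-one-direction-column} to assign well-defined column and row signs, and then verify that $\psi$ is a common linear extension of all the local orders so that Proposition~\ref{prop-geom-realization-consistent} applies. The only cosmetic differences are that the paper defines $M^\ggridded(k,\ell)=c_k r_\ell$ directly (rather than reading off monotonicity and then checking the partial-multiplication property), and in the different-cells case the paper invokes Proposition~\ref{prop-distinguish-perms} on the triple $i_2,i_3,i_4$ rather than Proposition~\ref{prop-separate-cells} on the quadruple $i_1,i_2,i_3,i_4$; both arguments give $\psi(i_2)<\psi(i_3)$ once the reading order of the shared letter fixes the orientation.
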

\begin{proof}
Let $\psi\st V(G_\pi)\to V(\Gamma_D(w))$ be an isomorphism and let~$\pi^\gridded$ be an~$M$-gridding of~$\pi$. Then reletter and regrid as described above to form a word~$w^\gridded\in(\Sigma^\gridded)^\ast$ and a regridded permutation $\pi^\ggridded$. Under these hypotheses, our previous discussion implies that~$\pi^\ggridded$ has at most $t(1+2ur)$ columns and at most $u(1+2tr)$ rows. We may remove any empty columns or rows in~$\pi^\ggridded$, and so we may assume that every column and every row of~$\pi^\ggridded$ contains at least one entry.

We now define column and row signs~$(c_k)$ and~$(r_\ell)$. Consider first the~$k$\th column of~$\pi^\ggridded$. By removing empty columns, we have ensured that there is at least one letter that encodes an entry of this column. We set $c_k=1$ if the horizontal reading order of this letter is left-to-right, and $c_k=-1$ if the horizontal reading order of this letter is right-to-left. Proposition~\ref{prop-two-letters-one-direction-column} guarantees that this column sign is well-defined---every letter encoding entries in this column has the same horizontal reading order. We assign row signs analogously, setting $r_\ell=1$ if a letter encoding an entry of the $\ell$\th row has vertical reading order bottom-to-top, and $r_\ell=-1$ if it has vertical reading order top-to-bottom. Proposition~\ref{prop-two-letters-one-direction-column} again shows that this row sign is well-defined.

We now define a~$\zpm$~matrix~$M^\ggridded$ by~$M^\ggridded(k,\ell)=c_k r_\ell$ for every $k$ and $\ell$. This is a partial multiplication matrix by definition, and it follows from the definition of~$M^\ggridded$ that~$\pi^\ggridded$ is an~$M^\ggridded$-gridding of~$\pi$. The proof will be complete if we can show that~$\pi^\ggridded$ has a geometric realization. To do this, we appeal to Proposition~\ref{prop-geom-realization-consistent}, which shows that~$\pi^\ggridded$ has a geometric realization if and only if the local orders $\{\colorder_k\}_k$ and $\{\roworder_\ell\}_\ell$ arising from $\pi^\ggridded$, $(c_k)$, and $(r_\ell)$ are consistent.

We prove that these local orders are consistent by establishing that $\psi$ corresponds to a common linear extension of all of them. By symmetry, it suffices to consider a pair of entries in the same column, so suppose that for some indices $i_2<i_3$, the entries $\pi(i_2)$ and $\pi(i_3)$ lie together in the~$k$\th column of~$\pi^\ggridded$. We want to show that $\psi(i_2)<\psi(i_3)$ if and only if $c_k=1$. This holds by the definition of $c_k$ if $\pi(i_2)$ and $\pi(i_3)$ are encoded by the same letter of $\Sigma^\gridded$, so we may assume that these entries are encoded by different letters. In this case, we can apply Proposition~\ref{prop-splitting-entries} and its symmetry (as in the proof of Proposition~\ref{prop-two-letters-one-direction-column}) to see that there are indices $i_1$ and $i_4$ satisfying $i_1<i_2<i_3<i_4$ so that $\pi(i_1)$ and $\pi(i_3)$ are encoded by the same letter of $\Sigma^\gridded$ and $\pi(i_2)$ and $\pi(i_3)$ are encoded by the same letter of $\Sigma^\gridded$.

There are two cases, depending on whether $\pi(i_2)$ and $\pi(i_3)$ share a cell in~$\pi^\gridded$. First suppose that $\pi(i_2)$ and $\pi(i_3)$ lie in different cells of~$\pi^\gridded$. Since~$\pi(i_2)$ lies in a different cell of $\pi^\gridded$ from $\pi(i_1)$ and $\pi(i_3)$, it follows that $\pi(i_2)$ separates these two entries. Thus Proposition~\ref{prop-distinguish-perms} shows that $\psi(i_1)<\psi(i_2)<\psi(i_3)$ or the reverse. Which of these inequalities we have is determined by the horizontal reading order of the letter that encodes $\pi(i_2)$ and $\pi(i_4)$. Since that reading order is encoded in $c_k$, it follows that $\psi(i_2)<\psi(i_3)$ if and only if~${i_2\colorder_k i_3}$, which is what we desire in this case.

Now suppose that $\pi(i_2)$ and $\pi(i_3)$ lie in the same cell of~$\pi^\gridded$, although note that this does not necessarily mean that they share the same cell of~$\pi^\ggridded$. Because~$\pi$ has only trivial monotone intervals, $\pi(i_2)$ and $\pi(i_3)$ must be separated by some entry~$\pi(x)$ that lies outside their common cell in~$\pi^\gridded$. Because all of the entries $\pi(i_1)$, $\pi(i_2)$, $\pi(i_3)$, and $\pi(i_4)$ lie in the same cell of~$\pi^\gridded$, this means that~$\pi(x)$ also separates $\pi(i_1)$ from $\pi(i_3)$ and $\pi(i_2)$ from $\pi(i_4)$. Therefore we may apply Proposition~\ref{prop-distinguish-perms} to see that if $c_k=1$, then both $\psi(i_1)<\psi(x)<\psi(i_3)$ and $\psi(i_2)<\psi(x)<\psi(i_4)$, which implies that $\psi(i_2)<\psi(i_3)$, or if $c_k=-1$, then the reverse holds. In other words, as in the other case, $\psi(i_2)<\psi(i_3)$ if and only if $i_2\colorder_k i_3$.

This shows that the column orders are all consistent with $\psi$. By a symmetrical argument, it follows that the row orders are also consistent with $\psi$, and thus that the column and row orders are consistent with each other. This guarantees, via Proposition~\ref{prop-geom-realization-consistent}, that~$\pi^\ggridded$ has a geometric realization, and thus completes the proof.
\end{proof}

\subsection{From one permutation to the entire class}
\label{subsec-end-of-proof}

We now conclude our proof of Theorem~\ref{thm-ggc-lettericity}. Suppose we have a permutation class $\C$ for which the corresponding graph class $G_{\C}$ has lettericity at most~$r$. We want to show that $\C$ is contained in some geometric grid class. By Corollary~\ref{cor-mono-griddable-graphs}, we know that $\C$ is monotonically griddable, so $\C\subseteq\Grid(M)$ for some~$\zpm$~matrix~$M$. Suppose that the matrix~$M$ has size~$t\times u$.

Choose an arbitrary permutation $\pi\in\C$ of length $n$. As we remarked in Section~\ref{subsec-mono-intervals}, it suffices to consider the case where~$\pi$ has only trivial monotone intervals. Because the lettericity of $G_\C$ is at most~$r$, there is some alphabet $\Sigma$ of size at most~$r$, some decoder $D\subseteq\Sigma^2$, and some word~$w\in\Sigma^n$ such that $G_\pi\cong \Gamma_D(w)$. We therefore have the hypotheses of Proposition~\ref{prop-one-perm-final-result}, so there is some~$\zpm$~matrix~$M_\pi$ of size at most $t(1+2ur)\times u(1+2tr)$ for which $\pi\in\Geom(M_\pi)$.

\interfootnotelinepenalty=10000

Therefore for every permutation $\pi\in\C$ with only trivial monotone intervals, there is a~$\zpm$~matrix~$M_\pi$ of size at most $t(1+2ur)\times u(1+2tr)$ for which $\pi\in\Geom(M_\pi)$. Thus there is some finite~$\zpm$~matrix~$M$ that contains all such matrices~$M_\pi$ as submatrices%
\footnote{In fact, this step of the proof does not require much expansion of these matrices. As in Section~\ref{sec-ggc-encoding} where we observed that $\Geom(M)=\Geom(M^{\times 2})$ for every~$\zpm$~matrix~$M$, if we replace every entry by $\tinymatrix{-1&1\\1&-1}$, it follows that for every~$\zpm$~matrix~$M$ of size~$t\times u$, we have $\Geom(M)\subseteq \Geom(S_{t,u})$ where $S_{t,u}$ is the $2t\times 2u$ matrix defined by $S_{t,u}(k,\ell)=(-1)^{k+\ell-1}$. One therefore only needs to consider a large enough matrix of this form.}, and thus we have $\C\subseteq\Geom(M)$, completing the proof of Theorem~\ref{thm-ggc-lettericity}.

\section{Concluding Remarks}
\label{sec-conclusion}

We have proved that, when restricted to classes of inversion graphs, bounded lettericity is equivalent to geometric griddability of the corresponding permutation class. Several intriguing questions about both of these concepts remain. One area that is currently under active investigation is a characterization of the geometrically griddable permutation classes; Theorem~\ref{thm-mono-griddable} completely characterizes the \emph{monotonically} griddable permutation classes, but as yet it has no geometric analogue. Similarly, there is no known characterization of graph classes of bounded lettericity, and the results established here suggest that it may be possible to answer both of these questions simultaneously.

Even if these characterizations remain out of reach, a decision procedure to determine if a given class (of permutations or graphs, specified by their minimal forbidden objects) is geometrically griddable or of bounded lettericity would be desirable.

There is an ongoing effort to develop robust tools for tackling these problems. In~\cite{alecu:understanding-l:}, Alecu and Lozin introduce the notion of \emph{locally ordered hypergraphs}: hypergraphs enhanced with linear orders on their edges, subject to a local consistency condition that the orders should agree where they overlap. This attempts to abstract the settings of monotone griddings and that of letter graph expressions. A proof of Theorem~\ref{thm-ggc-lettericity} with a similar outline as ours, but using the machinery of locally ordered hypergraphs is presented in Alecu's thesis~\cite{alecu:a-parametric-ap:}.

%
%
%
%
%
%
%
%
%
%
%
%
%
%
%
%
%
%
%
%
%
%
%
%
%
%
%
%
%
%
%
%
%
%
%
%
%
%
%
%
%
%
%
%
%
%
%
%
%
%
%
%
%
%


\def\cprime{$'$}

\end{document}